\theoremstyle{plain}
\newtheorem{thm}{Theorem}[section]
\newtheorem{prop}[thm]{Proposition}
\newtheorem{lem}[thm]{Lemma}
\theoremstyle{definition}
\newtheorem{dfn}[thm]{Definition}
\newtheorem{exm}[thm]{Example}
\newtheorem{cond}[thm]{Condition}
\begin{document}

\title{On the number of points with bounded dynamical canonical height}
\author[K. Takehira]{Kohei Takehira}
\address{Graduate School of Science, Tohoku University, 
	6-3, Aoba, Aramaki-aza, Aoba-ku, 
	Sendai, 980-8578, Japan}
\email{kohei.takehira.p5@dc.tohoku.ac.jp}
\date{\today}

\maketitle

\begin{abstract}
  This paper discusses the number of points for which the dynamical canonical height is less than or equal to a given value.
  The height function is a fundamental and important tool in number theory to capture the ``number-theoretic complexity" of a point.
  Asymptotic formulas for the number of points in projective space below a given height have been studied by Schanuel \cite{Schanuel}, for example, and their coefficients can be written by class numbers, regulators, special values of the Dedekind zeta function, and other number theoretically interesting values.
  We consider an analogous problem for dynamical canonical height, a dynamical analogue of the height function in number theory, introduced by Call-Silverman \cite{Call-Silverman}.
  The main tool of this study is the dynamical height zeta function studied by Hsia \cite{Hsia}.
  In this paper, we give explicit formulas for the dynamical height zeta function in special cases, derive general formulas for obtaining asymptotic behavior from certain functions, and combine them to derive asymptotic behavior for the number of points with bounded dynamical canonical height.
\end{abstract}

\section{Introduction}

 Let $K$ be a global field and $M_K$ denotes the set of all places of $K$.
The (relative) height function $H_K \colon K \to \mathbb{R}$ is defined by 
\[ H_K(x) = \prod_{v \in M_K} \max \{1, |x|_v\}\]
where $|\bullet|_v$ is the normalized absolute value associated to $v \in M_K$ (cf. \cite[PART B.]{Hindry-Silverman}).
The height function is a fundamental tool in number theory, which measures ``arithmetic complexity" of $x$. 
The height function is not only technically important, but is also an interesting research subject in its own right.
For example, Schanuel\cite{Schanuel}(1964) showed the following asymptotic formula for the number of points in projective $N$ space over a number field $K$.
\begin{align*}
  N(\mathbb{P}^{N}_K, H_K, B) &= \# \{ x \in \mathbb{P}^{N}_{K} \colon H_K(x) \le B\}\\
  &= C_{K,N} B^{N+1} + 
  \begin{cases}
    O(B \log B) & \text{ if } N = [K:\mathbb{Q}] = 1\\
    O(B^{N+1-1/[K:\mathbb{Q}]}) & \text{ otherwise }
  \end{cases}
\end{align*}
where 
\[ C_{K,N} = \frac{h_K R_K}{w_K \zeta_K(N+1)} (N+1)^{r_1+r_2-1} \left( \frac{2^{r_1}(2\pi)^{r_2}}{\sqrt{|D_K|}} \right)^{N+1}\]
Here, $h_K$ denotes the class number of $K$, $R_K$ represents the regulator of $K$, $w_K$ is the count of roots of unity in $K$, $\zeta_K$ corresponds to the Dedekind zeta function of $K$, $r_1$ and $r_2$ signify the number of real and complex places of $K$ respectively, and $D_K$ stands for the discriminant of $K$.

Call-Silverman(1993)\cite{Call-Silverman} introduced the dynamical variant of the height function. 
For a polynomial $\phi \in K[z]$, it is defined by
\[\widehat{H}_{\phi}(x) = \lim_{n \to \infty} H_K(\phi^n(x))^{1/d^n}.\]
The function $\widehat{H}_{\phi}$ serves as a well-suited function that effectively reflects the properties of dynamical systems. 
For instance, $\widehat{H}_{\phi}(x) = 1$ is equivalent to the condition that $x$ is preperiodic with respect to $\phi$, which means that $\phi^i(x)$ is periodic with respect to the iterations of $\phi$ for some $i$.

This paper studies the number of points whose dynamical canonical height is less than a given value.
For this purpose, we use the dynamical height zeta function studied by Hsia.

Hsia(1997)\cite{Hsia} considered the dynamical height zeta function
\[ Z_K(\phi, s) = \sum_{x \in K} \frac{1}{\widehat{H}_{\phi}(x)^s}, \mathrm{Re}(s) > 2 \]
when $K$ is a function field of a curve over a finite field $\mathbb{F}_q$ and $\phi$ has at worst \textit{mildly bad reduction} (cf.\cite{Hsia}).
He proved the meromorphic continuation of $Z_K(\phi, s)$ to the whole plane $\mathbb{C}$.
Also, he gave an asymptotic formula for $N(K,\widehat{H}_{\phi}, B) = \# \{ x \in K \colon \widehat{H}_{\phi}(x) \le B\}$ by using the residue of $Z_{K}(\phi, s)$ at $s = 2$.
However, the author think that his asymptotic formula contains a small error since $Z_{K}(\phi, s)$ can have infinitely many poles in the vertical line $\mathrm{Re}(s) = 2$.

The purpose of this paper is to give an argument to correct that error and to give more precise asymptotic formulas in some specific senarios.

The main results of this paper can be summarized as follows:
\begin{itemize}
  \item (Theorem \ref{main1}) Explicit formulas for $Z_K(\phi, s)$ for $\phi(z)$ belonging to certain families, when $K$ is a function field of genus 0 or 1 over $\mathbb{F}_q$.
  \item (Theorem \ref{main2}) General formulas for deriving asymptotic behavior from zeta functions.
\end{itemize}
To be more precise, the main claims are as follows:

\begin{thm}[{Theorem \ref{explicit_formula_genus_1}}]\label{main1}
  Let $K$ be a funciton field of a curve over a fintie field $\mathbb{F}_q$ of genus $0$ or $1$ and let $d \ge 2$ be an integer greater than or equal to $2$.
  Assume that $f \in \mathcal{O}_K$ satisfies $v(f) < d$ for all $v \in M_K$ and put $S = \{v \in M_K \colon v(f) > 0\}$. 
  Then, the dynamical height zeta function of $\phi(z) =z^d + f^{-1}$ is given by the next formula.
  \[Z_K(\phi, s) = q^{1-g}\frac{\zeta_K(s-1)}{\zeta_K(s)} \prod_{v \in S} \frac{u_v^{v(f)} + (q_v - 1)u_v^d - q_v u_v^{d + v(f)}}{1 - u_v^d} + \frac{c(g)}{\zeta_K(s)} \prod_{v \in S} \frac{u_v^{v(f)} - u_v^d}{1 - u_v^d}.\]
  Here, $\zeta_K$ represents the Dedekind zeta function of $K$, $q_v = q^{f_v}$ stands for the cardinality of the residue field at $v \in M_K$, $u_v = q^{-sf_v/d}$, $c(0)=0$, and $c(1) = q-1$.
\end{thm}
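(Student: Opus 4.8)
The plan is to compute $Z_K(\phi,s)$ by exploiting the very simple shape of $\phi(z)=z^d+f^{-1}$: because $v(f)<d$ at every place, the local filtration of points by canonical height is controlled place-by-place, and one can read off $\widehat{H}_\phi(x)$ from the finitely many places in $S=\{v:v(f)>0\}$. First I would establish a local formula: for $x\in K$ and $v\in M_K$, compare $|\phi(x)|_v$ with $|x|_v$. When $|x|_v$ is large the leading term $z^d$ dominates and $\widehat{H}_{\phi,v}(x)=|x|_v$; when $|x|_v\le 1$ the orbit stays bounded and the local contribution is governed by $|f^{-1}|_v=q_v^{v(f)}$ (here using $v(f)<d$ so the fixed scaling $1/d^n$ in the limit does not wash this out). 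This gives an expression $\widehat{H}_\phi(x)=\prod_v \lambda_v(x)$ where $\lambda_v(x)$ depends only on $v(x)$ (and on $v(f)$, nontrivially only for $v\in S$). The key step is then to reorganize the sum $\sum_{x\in K}\widehat{H}_\phi(x)^{-s}$ as a sum over the ideal/divisor determined by the polar part of $x$, i.e. over effective divisors $D$ with the local conditions at $S$ imposed separately, and to recognize the resulting Euler-type product.

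Concretely, I would write $K$ as the function field and parametrize $x\in K$ by its divisor of poles; the number of $x$ with a prescribed pole divisor of degree $n$ is computed by Riemann–Roch, which in genus $0$ or $1$ is essentially $q^{n+1-g}$ up to a correction term (this is the source of the constant $c(g)$: $c(0)=0$ since $h^0$ is exactly $n+1$ for $n\ge 0$ on $\mathbb P^1$, while $c(1)=q-1$ accounts for the genus-$1$ discrepancy $\ell(D)=\deg D$ for $\deg D\ge 1$). Splitting off the places of $S$, at each $v\in S$ the allowed pole orders partition into the range $0\le j\le v(f)$ (where the local height is the constant $q_v^{v(f)}$) and $j\ge v(f)$ (where it is $q_v^{j}$ with an extra boundary count when $j=v(f)$), which after summing the geometric series in $u_v=q^{-sf_v/d}$ produces exactly the two local factors $\dfrac{u_v^{v(f)}+(q_v-1)u_v^d-q_vu_v^{d+v(f)}}{1-u_v^d}$ and $\dfrac{u_v^{v(f)}-u_v^d}{1-u_v^d}$ appearing in the two terms. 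The global sum over the remaining (good) places assembles, via the standard Euler product manipulation, into $\zeta_K(s-1)/\zeta_K(s)$ for the main term and $1/\zeta_K(s)$ for the correction term, with the $q^{1-g}$ prefactor coming from the Riemann–Roch normalization.

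I expect the main obstacle to be the careful bookkeeping at the places $v\in S$: one must verify precisely how $\widehat{H}_{\phi,v}$ behaves for each pole order $j$ — in particular that the limit defining $\widehat{H}_\phi$ stabilizes after one step because $v(f)<d$ forces the constant term $f^{-1}$ never to overtake $z^d$ under iteration — and then confirm that the boundary cases $j=v(f)$ and $j=d$ contribute with exactly the stated multiplicities (the $(q_v-1)$ and $-q_v$ coefficients). A secondary technical point is making the passage from "summing over $x\in K$" to "summing over pole divisors and then over residues mod those divisors" rigorous, i.e. justifying the Riemann–Roch count and its error term uniformly; once the local analysis is pinned down, the global part is a routine Euler product identity, and specializing $g\in\{0,1\}$ via the explicit $\ell(D)$ formulas yields the two displayed terms with the claimed $c(g)$.
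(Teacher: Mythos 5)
Your proposal takes a genuinely different route from the paper's. The paper never counts points by pole divisor directly: it partitions $K=\coprod_{T\subset S}\mathbb{D}_T$ according to the sign of $v(x)$ at each bad place, observes that $\widehat{H}_\phi/H_K$ is the constant $\rho_T=\prod_{v\in T}q_v^{v(f)/d}$ on $\mathbb{D}_T$, and then evaluates each partial height zeta $W(\mathbb{D}_T,s)$ by an inclusion--exclusion that reduces to Hsia's Theorem~\ref{Hsia_partialheightzeta} (adelic integral plus an explicit correction $B(q^{-s})/\zeta_K(s)\prod(1-q_v^{-s})$); the first displayed factor then comes from computing the adelic integral as in Lemma~\ref{computation_of_integration}. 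Your plan is to bypass the adelic machinery and do the Riemann--Roch count by hand, which amounts to reproving the relevant part of Hsia's theorem in this special case. That is defensible, but you should be aware you are not avoiding the hard part; you are relocating it.

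There is, however, a concrete error in your local analysis that would make the computation come out wrong. You assert that at $v\in S$ the pole order $j$ splits into a range $0\le j\le v(f)$ where the local height is the constant $q_v^{v(f)}$, and a range $j\ge v(f)$ where it is $q_v^{j}$. That is not what the dynamics gives. The correct structure (Example~\ref{computation_local_height}) is a two-case dichotomy at the single threshold $j=0$: if $v(x)\ge 0$ then $\widehat{H}_{\phi,v}(x)=q_v^{v(f)/d}$ (note the exponent is $v(f)/d$, not $v(f)$, and it is in general a non-integral power of $q_v$); if $v(x)=-j<0$ then $\widehat{H}_{\phi,v}(x)=q_v^{j}$. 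In the variable $u_v=q_v^{-s/d}$ these contribute $u_v^{v(f)}$ and $u_v^{dj}$ respectively, and the correct local Euler factor
\[
\frac{u_v^{v(f)}+(q_v-1)u_v^{d}-q_vu_v^{d+v(f)}}{1-u_v^{d}}
\]
arises from $u_v^{v(f)}+\sum_{j\ge 1}(q_v-1)q_v^{j-1}u_v^{dj}$ divided by the good-reduction analogue; there is no intermediate plateau of pole orders. Your "boundary count at $j=v(f)$" has no counterpart in the dynamics. You also gloss over the fact that the number of $x\in K$ with prescribed pole divisor exactly $D$ is not $q^{\ell(D)}$ but requires inclusion--exclusion over $D'<D$ inside $L(D)$; this is where the $c(g)$ correction and the low-degree Riemann--Roch anomalies actually enter, and the bookkeeping must be done carefully (it is precisely what Hsia's $B(q^{-s})$ encodes). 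With these two points repaired your strategy could be made to work, but as written it would not reproduce the stated local factors.
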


\begin{thm}[{Theorem \ref{GeneralAsymptotic}}]\label{main2}
  Let $S$ be a set, and let $H \colon S \to \mathbb{R}_{>0}$ be a map such that for any $B \in \mathbb{R}_{>0}$, $N(S,H,B)= \#\{x \in S \colon H(x) \le B\} < \infty$. 
  Furthermore, define $Z(S,H,s)=\sum_{x \in S} H(x)^{-s}$, and assume that it converges absolutely for complex numbers $s$ with sufficiently large real parts and that there exists $\alpha \in \mathbb{R}_{>0}$ such that $Z(S,H,s) \in \mathbb{Q}(\alpha^{-s})$.
  Let $\mathrm{Poles}$ be a set of poles of $Z(S,H,s)$ defined as follows:
  \[ \mathrm{Poles} = \left \{a \in \mathbb{C} \colon a \text{ is a pole of } f, 0 \le \mathrm{Re}(a), 0 \le \mathrm{Im}(a) < \frac{2\pi}{\log \alpha} \right \}.\]
  For a pole $a \in \mathrm{Poles}$, we write the Laurent expansion at $s = a$ as
  \[ Z(S,H,s) = \sum_{n=1}^{N_a} \frac{c_n(a)}{(\log \alpha)^n(s - a)^n} + (\text{holomorphic at }s=a).\]
  Then, the following asymptotic formula for $N(S,H,B)$ holds.
  \begin{align*}
    N(S, H, B) =  \sum_{\alpha^m \le B} \sum_{a \in \mathrm{Poles}} \alpha^{am} \sum_{n = 1}^{N_a} c_{n}(a) \frac{m^{n-1}}{(n - 1)!} + O(1).
  \end{align*}
\end{thm}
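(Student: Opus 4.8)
The plan is to transfer everything to the rational function in the single variable $t=\alpha^{-s}$. Write $Z(S,H,s)=R(t)$ with $R\in\mathbb Q(t)$, and assume $\alpha>1$ (the case of interest; the general case reduces to it by replacing $\alpha$ with $\alpha^{-1}$), so that $\mathrm{Re}(s)\to+\infty$ corresponds to $t\to0$. First I would record the dictionary between $R$ and the counting function. Since $Z(S,H,s)$ converges absolutely for $\mathrm{Re}(s)$ large, $R$ is given there by its Laurent expansion $R(t)=\sum_{m\ge m_0}N_m\,t^m$ at $t=0$, where $m_0\le 0$. Comparing the two expansions $\sum_{m\ge m_0}N_m(\alpha^m)^{-s}$ and $\sum_{x\in S}H(x)^{-s}$ of $Z(S,H,s)$ and using the uniqueness of the coefficients of a (generalized) Dirichlet series, one obtains that every value of $H$ is of the form $\alpha^m$ with $m\ge m_0$ and that $N_m=\#\{x\in S:H(x)=\alpha^m\}=[t^m]R(t)$. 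Hence, writing $M=M(B):=\lfloor\log B/\log\alpha\rfloor$,
\[ N(S,H,B)=\sum_{m=m_0}^{M}N_m=\sum_{m=m_0}^{M}[t^m]R(t). \]

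Next I would split $R$ into partial fractions: $R=P+R_{(0)}+\sum_{\rho}R_\rho$, where $P$ is the polynomial part, $R_{(0)}$ the principal part at $t=0$, and $R_\rho$ the principal part at a pole $\rho\ne0$. The contributions of $P$ and $R_{(0)}$ to $N(S,H,B)$ are eventually constant (their coefficient sequences have finite support), and $\sum_{|\rho|>1}R_\rho$ is holomorphic on a disc of radius $>1$, so its Taylor coefficients decay geometrically; all of these are $O(1)$. This reduces the theorem to identifying $\sum_{0<|\rho|\le1}\sum_{m=0}^{M}[t^m]R_\rho(t)$ with the stated main term. Here I would use the bijection $\rho\mapsto a(\rho)$ from the poles $\rho$ of $R$ with $0<|\rho|\le1$ onto $\mathrm{Poles}$, given by $\alpha^{-a(\rho)}=\rho$: the condition $\mathrm{Re}(a)\ge0$ is precisely $|\rho|\le1$, the strip $0\le\mathrm{Im}(a)<2\pi/\log\alpha$ picks out a unique preimage, and the orders of the pole of $Z$ at $a(\rho)$ and of $R$ at $\rho$ agree because $s\mapsto\alpha^{-s}$ is a local biholomorphism.

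For such a $\rho$, writing the principal part as $R_\rho(t)=\sum_{n=1}^{N_\rho}\beta_n(1-t/\rho)^{-n}$ and expanding at $t=0$ gives $[t^m]R_\rho(t)=\rho^{-m}L_\rho(m)=\alpha^{a(\rho)m}L_\rho(m)$, where $L_\rho(X)=\sum_n\beta_n\binom{X+n-1}{n-1}$ has degree $N_\rho-1$. So the whole statement comes down to the polynomial identity
\[ L_\rho(X)=\sum_{n=1}^{N_\rho}\frac{c_n(a(\rho))}{(n-1)!}\,X^{n-1} \qquad (\star) \]
for every such $\rho$: granting $(\star)$, the double sum over $0<|\rho|\le1$ becomes exactly $\sum_{a\in\mathrm{Poles}}\sum_{m=0}^{M}\alpha^{am}\sum_{n}c_n(a)\frac{m^{n-1}}{(n-1)!}$. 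To prove $(\star)$ I would compare principal parts at $s=a:=a(\rho)$. Since $R-R_\rho$ is holomorphic at $t=\rho$, the principal part of $Z(s)=R(\alpha^{-s})$ at $s=a$ equals that of $R_\rho(\alpha^{-s})$; and with $w=s-a$ and $c=\log\alpha$ one has, in the region of convergence,
\[ R_\rho(\alpha^{-s})=\sum_{m\ge0}L_\rho(m)e^{-mwc}=L_\rho\!\left(-\tfrac1c\,\tfrac{d}{dw}\right)\frac{1}{1-e^{-wc}}. \]
The function $1/(1-e^{-wc})$ has a simple pole at $w=0$ with residue $1/c$, so applying $L_\rho(-\tfrac1c\tfrac{d}{dw})=\sum_j\ell_j(-\tfrac1c\tfrac{d}{dw})^j$ (with $\ell_j=[X^j]L_\rho$) to $\tfrac{1}{cw}$ produces exactly $\sum_j\ell_j\tfrac{j!}{c^{j+1}}w^{-j-1}=\sum_{n\ge1}\tfrac{(n-1)!\,\ell_{n-1}}{c^{n}}w^{-n}$ as the singular part, and nothing else is singular. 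Comparing with the prescribed expansion $Z(s)=\sum_{n}c_n(a)/((\log\alpha)^n(s-a)^n)+(\text{holomorphic})$ gives $c_n(a)=(n-1)!\,\ell_{n-1}$, i.e.\ $(\star)$.

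The only substantive step is $(\star)$: concretely, the fact that $\sum_{m\ge0}m^je^{-mwc}$ has principal part $j!/(c^{j+1}w^{j+1})$ at $w=0$ and no lower-order poles. This is where the simplicity of the zero of $1-e^{-wc}$ at $w=0$ enters, and it is precisely what makes the $(\log\alpha)^n$ in the Laurent expansion the right normalisation. Everything else --- the reduction to $\{\alpha^m\}$-valued heights, the partial-fraction splitting, and the $O(1)$ bounds --- is routine; in the write-up I would also note that the sum $\sum_{\alpha^m\le B}$ in the statement runs over $m\in\mathbb Z$ (automatically bounded below by the order of $R$ at $t=0$), and that the pole of $R$ at $t=0$, which records the finitely many points of height $<1$, is absorbed harmlessly into the $O(1)$.
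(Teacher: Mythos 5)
Your proof is correct but takes a genuinely different route from the paper. The paper proves Theorem~\ref{GeneralAsymptotic} entirely in the $s$-variable: it first establishes the Stirling/Bernoulli identity of Lemma~\ref{TaylorExpansion}, uses it to subtract from $f(s)$ a finite linear combination of functions $(1-\alpha^{-(s-a)})^{-k}$ that cancels every pole with $\operatorname{Re}(a)\ge 0$, re-expands that subtracted part as a power series in $\alpha^{-s}$ via the binomial theorem, and invokes the Stirling identity $x^n=\sum_k(-1)^{n-k}\left\{ {n\atop k} \right\}(x)_k$ to simplify the coefficient to $m^{n-1}/(n-1)!$; the remainder then has geometrically decaying Dirichlet coefficients. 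You instead pass to $t=\alpha^{-s}$, write $Z=R(t)\in\mathbb{Q}(t)$, take a partial-fraction decomposition $R=P+R_{(0)}+\sum_\rho R_\rho$, discard the $O(1)$ pieces (polynomial part, pole at $t=0$, poles with $|\rho|>1$), match the poles with $0<|\rho|\le1$ bijectively to $\mathrm{Poles}$, and reduce the main term to the identity $(\star)$ relating the Taylor-coefficient polynomial $L_\rho$ to the Laurent data $c_n(a)$; you prove $(\star)$ by the differential-operator trick showing $\sum_{m\ge0}m^j e^{-mwc}$ has principal part $j!/(c^{j+1}w^{j+1})$. These are two packagings of the same combinatorial core --- your $(\star)$ is essentially an equivalent reformulation of Lemma~\ref{TaylorExpansion} combined with the Stirling-number computation --- but your version avoids Stirling and Bernoulli numbers altogether and makes the structure (the counting sequence is a finite sum of ``polynomial $\times$ geometric'' terms, one per root of the denominator) more transparent. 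The trade-off is scope: the paper's Theorem~\ref{GeneralAsymptotic} is phrased for any meromorphic continuation to a half-plane, whereas partial fractions need $R$ rational; since Theorem~\ref{main2} assumes $Z\in\mathbb{Q}(\alpha^{-s})$, this is no loss for the statement at hand. One small point you implicitly clean up (and should keep in a write-up): the sum $\sum_{\alpha^m\le B}$ in the statement should be understood as running over $m\ge 0$ (or $m\ge m_0$), which your $m_0$-bookkeeping handles and the paper leaves tacit.
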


By combining these theorems, it becomes possible to explicitly derive asymptotic formulas for the number of points with bounded dynamical canonical height (Section \ref{Example}).

The structure of this paper is as follows:
\begin{itemize}
  \item In Section 2, we present the definition of the dynamical canonical height, review its fundamental properties, and introduce the dynamical height zeta function.
  \item In Section 3, we revisit the computational method for the dynamical height zeta function by Hsia. While the content of this section is entirely due to Hsia \cite{Hsia}, it is necessary for the subsequent discussions, so we provide a brief overview.
  \item In Section 4, we derive Theorem \ref{main1}, which is an explicit formula for the dynamical height zeta function.
  \item In Section 5, we prove Theorem \ref{main2}, which is a general theorem for deriving asymptotic formulas from zeta functions.
  \item Finally, in Section 6, we use these theorems to demonstrate how the asymptotic behavior of the number of points with bounded dynamical canonical height can be obtained.
\end{itemize}

\section*{Acknowledgments}

I would like to show my greatest appreciation to Professor Nobuo Tsuzuki for
his advice and helpful comments.
The author is financially supported by JSPS KAKENHI Grant Number JP 22J20227 and by the WISE Program for AI Electronics, Tohoku University.

\section{Dynamical canonical height and dynamical height zeta function}

In this section, we review the definiton and basic properties of the dynamical canonical height and the dynamical height zeta function.

\subsection{Dynamical canonical height}

In this subsection, we briefly recall the theory of the dynamical canonical height for a polynomial dynamical system.
One of basic reference of this topic is Silverman's book \cite[Chapter 3 and Section 5.9.]{Silverman}
We fix the folloing notation in this subsection.

\begin{itemize}
  \item $K$ is a global field.
  \item $M_K$ is the set of all places of $K$.
  \item $\phi \in K[z]$ is a polynomial with $\deg \phi = d \ge 2$.
  \item For $v \in M_K$, $K_v$ represents the completion of $K$ at $v$.
  \item For non-archimedean $v \in M_K$, $\mathcal{O}_v$ stands for the $v$-adic integer ring.
\end{itemize}

\begin{dfn}[{Dynamical canonical height, cf. \cite[Section 3.4.]{Silverman}}]
  \[ \widehat{H}_{\phi}(x) = \lim_{n \to \infty} H_K(\phi^n(x))^{1/d^n} \]
  \[ \widehat{h}_{\phi}(x) = \lim_{n \to \infty} \frac{1}{d^n}h_K(\phi^n(x))\]
\end{dfn}

As is often the case in many discussions in number theory, the concept of height functions over global fields can be reduced to discussions over local fields. 
Height functions over local fields are defined as follows.

\begin{dfn}[{Local canonical height, cf. \cite[Section 3.5.]{Silverman}}]
  \[ \widehat{H}_{\phi, v}(x) = \lim_{n \to \infty} \max \{1, |\phi^n(x)|_v\}\]
  \[ \widehat{\lambda}_{\phi, v}(x) = \lim_{n \to \infty} \frac{1}{d^n} \log \max \{ 1, |\phi^n(x)|_v\}\]
\end{dfn}

From height functions over local fields, height functions over global fields can be reconstructed as follows.

\begin{prop}[{cf. \cite[Theorem 3.29.]{Silverman}}]
  \[\widehat{h}_{\phi} = \sum_{v \in M_K} \widehat{\lambda}_{\phi, v}\]
\end{prop}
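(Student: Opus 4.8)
The plan is to reduce the global statement to a finite sum of local statements and then interchange a limit with that (fixed) finite sum. First I would recall that, by the very definition of the height, $h_K(y) = \log H_K(y) = \sum_{v \in M_K} \log\max\{1,|y|_v\}$, where for each fixed $y$ only finitely many terms are nonzero since $|y|_v \le 1$ for all but finitely many $v$. Applying this to $y = \phi^n(x)$ and dividing by $d^n = \deg(\phi^n)$ gives, for every $n$,
\[ \frac{1}{d^n} h_K(\phi^n(x)) = \sum_{v \in M_K} \frac{1}{d^n}\log\max\{1,|\phi^n(x)|_v\}. \]

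The crucial point — and the step I expect to be the main obstacle — is to bound the set of places that actually contribute to these sums by a single finite set $S$ independent of $n$. I would take $S$ to consist of the archimedean places, the (finitely many) non-archimedean $v$ at which some coefficient of $\phi$ fails to be $v$-integral, and the (finitely many) $v$ with $|x|_v > 1$; thus $S$ is finite. For $v \notin S$ the coefficients of $\phi$ lie in $\mathcal{O}_v$, so $|\phi(y)|_v \le \max\{1,|y|_v\}^d$ for all $y$; since moreover $|x|_v \le 1$, an induction on $n$ yields $|\phi^n(x)|_v \le 1$ for every $n$. Hence every term with $v \notin S$ vanishes in the displayed identity, and likewise $\widehat{\lambda}_{\phi,v}(x) = \lim_{n\to\infty} d^{-n}\log\max\{1,|\phi^n(x)|_v\} = 0$ for $v \notin S$.

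Consequently, for every $n$ the sum above is really a sum over the fixed finite set $S$:
\[ \frac{1}{d^n} h_K(\phi^n(x)) = \sum_{v \in S} \frac{1}{d^n}\log\max\{1,|\phi^n(x)|_v\}. \]
Since each of the finitely many limits $\lim_{n\to\infty} d^{-n}\log\max\{1,|\phi^n(x)|_v\} = \widehat{\lambda}_{\phi,v}(x)$ exists (this is the content of the definition of the local canonical height, cf. \cite[Section 3.5.]{Silverman}), I may pass to the limit term by term over the finite index set $S$, obtaining
\[ \widehat{h}_{\phi}(x) = \lim_{n\to\infty} \sum_{v \in S} \frac{1}{d^n}\log\max\{1,|\phi^n(x)|_v\} = \sum_{v \in S} \widehat{\lambda}_{\phi,v}(x) = \sum_{v \in M_K} \widehat{\lambda}_{\phi,v}(x), \]
the final equality holding because all the omitted terms vanish. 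This proves the proposition; the only genuinely delicate ingredient is the uniform control of the supporting set of places, and everything else is bookkeeping with the definitions.
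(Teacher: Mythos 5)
Your argument is correct. The paper does not reproduce a proof of this proposition at all --- it is stated purely as a citation to \cite[Theorem 3.29]{Silverman} --- so there is no internal proof to compare against; what you have written is essentially the standard argument from that reference. The one genuinely nontrivial step is exactly the one you flag: finding a finite set $S$ of places, independent of $n$, outside of which every term of $d^{-n}h_K(\phi^n(x)) = \sum_v d^{-n}\log\max\{1,|\phi^n(x)|_v\}$ vanishes. Your choice of $S$ (archimedean places, places where some coefficient of $\phi$ is not $v$-integral, places where $|x|_v>1$) and the induction, via the ultrametric bound $|\phi(y)|_v\le 1$ whenever $|y|_v\le 1$ and the coefficients lie in $\mathcal{O}_v$, are both correct. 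Note that you only use $v$-integrality of the coefficients, not the stronger good-reduction condition in the paper (which also demands that the leading coefficient be a $v$-adic unit); this is fine, since for the one-sided bound $|\phi^n(x)|_v\le 1$ integrality suffices. With the support confined to the fixed finite $S$, swapping the limit with the finite sum is immediate, and the identity $\widehat{h}_\phi = \sum_v \widehat{\lambda}_{\phi,v}$ follows.
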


The behavior of the local canonical height becomes trivial for places with good reduction.
The notion of a polynomial dynamical system having good reduction is defined as follows.

\begin{dfn}[{cf. \cite[Section 2.5]{Silverman}}]
  For a global field $K$ and a polynomial $\phi \in K[z]$, $\phi$ having \textit{good reduction} at non-archimedean place $v \in M_K$ means that $\phi$ is defined with coefficients in $\mathcal{O}_v$, and its leading coefficient is a $v$-adic unit.
  In cases where the above condition does not hold, it is said that $\phi$ has \textit{bad reduction} at $v$.
\end{dfn}

For each $v \in M_K$, we define $\lambda_{v}(x) = \log \max \{1, |x|_v\}$ and $H_{v}(x) = \max \{1, |x|_v\}$. 
In this case, it's indeed true that:
\[h_K(x) = \sum_{v \in M_K} \lambda_{v}(x), H_K(x) = \prod_{v \in M_K} H_{v}(x).\]

\begin{prop}[{cf. \cite[Proposition 5.58]{Silverman}}]
  For non-archimedean $v \in M_K$, if $\phi$ has good reduction, then the following equalities hold:
  \[ \widehat{\lambda}_{\phi, v} = \lambda_{v}, \widehat{H}_{\phi, v} = H_{v}\]
\end{prop}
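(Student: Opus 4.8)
The plan is to reduce both identities to a single step-$1$ functional equation for the ordinary local height at a place of good reduction, namely $H_v(\phi(x)) = H_v(x)^d$ (equivalently $\lambda_v(\phi(x)) = d\,\lambda_v(x)$) for all $x$, after which the limits defining $\widehat{H}_{\phi,v}$ and $\widehat{\lambda}_{\phi,v}$ become limits of constant sequences. Write $\phi(z) = a_d z^d + a_{d-1} z^{d-1} + \cdots + a_0$; by the definition of good reduction at $v$ all coefficients lie in $\mathcal{O}_v$, so $|a_i|_v \le 1$, and moreover $|a_d|_v = 1$.

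First I would prove the functional equation by splitting on the size of $|x|_v$ and using the ultrametric inequality. If $|x|_v \le 1$, then $|a_i x^i|_v \le 1$ for every $i$, hence $|\phi(x)|_v \le 1$, so $H_v(\phi(x)) = 1 = H_v(x)^d$. If $|x|_v > 1$, then for $i < d$ one has $|a_i x^i|_v \le |x|_v^i < |x|_v^d = |a_d x^d|_v$, so the leading term strictly dominates; the ultrametric inequality then forces $|\phi(x)|_v = |x|_v^d > 1$, and therefore $H_v(\phi(x)) = |x|_v^d = H_v(x)^d$.

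Next, iterating gives $H_v(\phi^n(x)) = H_v(x)^{d^n}$ (equivalently $\lambda_v(\phi^n(x)) = d^n \lambda_v(x)$) for all $n \ge 0$, by an immediate induction on $n$ from the step-$1$ equation. Consequently $\max\{1, |\phi^n(x)|_v\}^{1/d^n} = H_v(x)$ and $\tfrac{1}{d^n}\log\max\{1, |\phi^n(x)|_v\} = \lambda_v(x)$ for every $n$, so the defining sequences are constant, and passing to the limit yields $\widehat{H}_{\phi,v} = H_v$ and $\widehat{\lambda}_{\phi,v} = \lambda_v$.

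The argument is essentially routine; the only point that genuinely uses the good-reduction hypothesis, and the one place to be careful, is the case $|x|_v > 1$: there one needs both that the non-leading coefficients are $v$-integral, so that the lower-order terms cannot exceed $|x|_v^{d-1}$, and that the leading coefficient is a $v$-adic unit, so that the top term has exact absolute value $|x|_v^d$; together these upgrade the ultrametric bound to an equality. Without either half of the hypothesis this step can fail, which is precisely why the conclusion is special to places of good reduction.
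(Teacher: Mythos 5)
The paper does not give its own proof of this proposition --- it is stated as a citation to Silverman's book (Proposition 5.58) --- so there is nothing in the paper to compare against. Your argument is correct and is essentially the standard textbook proof being cited: you verify the step-one functional equation $\lambda_v(\phi(x)) = d\,\lambda_v(x)$ directly from the ultrametric inequality (using integrality of the coefficients when $|x|_v \le 1$, and unit leading coefficient to get strict dominance of the top term when $|x|_v > 1$), iterate to get $\lambda_v(\phi^n(x)) = d^n \lambda_v(x)$, and observe that the defining sequences for $\widehat{\lambda}_{\phi,v}$ and $\widehat{H}_{\phi,v}$ are then constant. One small note: the paper's displayed definition $\widehat{H}_{\phi,v}(x) = \lim_{n\to\infty}\max\{1,|\phi^n(x)|_v\}$ omits the exponent $1/d^n$ (it must be there for consistency with $\widehat{\lambda}_{\phi,v} = \log\widehat{H}_{\phi,v}$); you correctly work with the exponentiated version, so your conclusion is sound.
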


From the above discussion, it follows that the non-trivial cases to consider when studying $\widehat{H}_{\phi}$ are only those where $\phi$ has bad reduction at some $v \in M_K$. 
Therefore, in the subsequent discussions, the focus is primarily on $H_K$, and methods are employed to correct the differences arising from $v \in M_K$ where bad reduction occurs.

\subsection{Dynamical height zeta function}

According to the Hsia's paper\cite{Hsia}, the following definition is due to Silverman in his unpublished talk at Harbard number theory seminar in 1994.

\begin{dfn}[Dynamical height zeta function]
  Let $K$ be a global field and $\phi \in K[z]$ be a polynomial with $\deg \phi \ge 2$.
  For $s \in \mathbb{C}$ satistying $\mathrm{Re}(s) > 2$, we define the \textit{dynamical height zeta function} as
  \[Z_K(\phi, s) = \sum_{x \in K} \frac{1}{\widehat{H}_{\phi}(x)^s}.\]
\end{dfn}

Let $\mu$ be the Haar measure on the adele ring $\mathbb{A}_K$, normalized so that the fundamental domain for the action of $K$ has measure 1. 
Additionally, for each $v \in M_K$, let $\mu_v$ be the Haar measure on $K_v$, normalized so that the measure of its ring of integers is 1. 
In this context, Hsia represented $Z_K(\phi, s)$ as an integral over the adeles plus a residue term.

\begin{thm}[{\cite{Hsia}}]\label{Hsia_main}
  Let $K$ be a funciton field of genus $g$ over $\mathbb{F}_q$, and let $\phi \in K[z]$ be a polynomial with at worst mildly bad reduction.
  Then, $Z_K(\phi, s)$ has a meromorphic continuation to $\mathbb{C}$ and 
  \begin{align*}
     Z_K(\phi, s) &= \int_{\mathbb{A}_K} \widehat{H}_{\phi}(x)^{-s} d \mu(x) + R(s) \\
     &= q^{1-g} \frac{\zeta_{K}(s-1)}{\zeta_{K}(s)}\prod_{v \in S} \frac{\int_{K_v} \widehat{H}_{\phi, v} (x)^{-s} d \mu_v(x)}{\int_{K_v} H_{v} (x)^{-s} d \mu_v(x)} + R(s)
  \end{align*}
  where $R(s)$ is a meromorphic function on $\mathbb{C}$ which has the property that $R(s) \prod_{v\in S}(1-q^{-f_v s})$ is an entire function.
\end{thm}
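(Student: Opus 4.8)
The plan is to reduce the statement to a product of local computations by using that $K$ is a discrete cocompact subgroup of $\mathbb{A}_K$ of covolume one, and to identify the correction term $R(s)$ with the Riemann--Roch defect in the associated count of points. First I would extend $\widehat{H}_{\phi}$ to $\mathbb{A}_K$ by $\widehat{H}_{\phi}(x)=\prod_{v\in M_K}\widehat{H}_{\phi,v}(x_v)$; this product is locally finite, since at a place $v$ of good reduction with $x_v\in\mathcal{O}_v$ one has $\widehat{H}_{\phi,v}(x_v)=H_v(x_v)=1$ and almost every component of an adele lies in $\mathcal{O}_v$. For $x\in K$ this agrees with $\widehat{H}_{\phi}(x)$ from the definition, since the limit defining $\widehat{H}_{\phi}$ may be interchanged with the uniformly finite product over $v$. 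I would also record the pointwise splitting $\widehat{H}_{\phi}(x)=H_K(x)\prod_{v\in S}\psi_v(x_v)$ with $\psi_v=\widehat{H}_{\phi,v}/H_v\equiv1$ at places of good reduction, and invoke the mild-bad-reduction analysis of the local canonical height to know that each $\int_{K_v}\widehat{H}_{\phi,v}(x)^{-s}\,d\mu_v(x)$, for $v\in S$, is a rational function of a fixed fractional power of $q_v^{-s}$ with poles confined to $\{\,s:q^{f_v s}=1\,\}$.

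Next I would establish the integral formula. For $\mathrm{Re}(s)$ large, $\widehat{H}_{\phi}(\cdot)^{-s}$ is $\mu$-integrable and, after passing from $\mu$ to the product measure $\bigotimes_v\mu_v$, the integral factors. Since $\mu_v(\mathcal{O}_v)=1$, the covolume of $K$ in $\mathbb{A}_K$ for $\bigotimes_v\mu_v$ is $q^{g-1}$ (a form of Riemann--Roch, via the degree of the different), so $\mu=q^{1-g}\bigotimes_v\mu_v$ and $\int_{\mathbb{A}_K}\widehat{H}_{\phi}(x)^{-s}\,d\mu(x)=q^{1-g}\prod_v\int_{K_v}\widehat{H}_{\phi,v}(x)^{-s}\,d\mu_v(x)$. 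A direct computation gives $\int_{K_v}\max\{1,|x|_v\}^{-s}\,d\mu_v(x)=(1-q_v^{-s})/(1-q_v^{1-s})$ at a place of good reduction, so the product over all good places telescopes into $\zeta_K(s-1)/\zeta_K(s)$; separating out $v\in S$ and using $(1-q_v^{1-s})/(1-q_v^{-s})=\bigl(\int_{K_v}H_v(x)^{-s}\,d\mu_v(x)\bigr)^{-1}$ produces the second displayed line of the theorem, apart from $R(s)$. Meromorphic continuation of this expression then follows from the rationality of $\zeta_K$ in $q^{-s}$ together with Step one.

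The substance is to control $R(s):=Z_K(\phi,s)-\int_{\mathbb{A}_K}\widehat{H}_{\phi}(x)^{-s}\,d\mu(x)$ by comparing the sum over $x\in K$ with the integral scale by scale. For $x\in K$ let $D_\infty(x)$ be the polar divisor, so $H_K(x)=q^{\deg D_\infty(x)}$, and record $(\mathrm{ord}_v x)_{v\in S}$. Grouping the $x$ by the part of $D_\infty(x)$ away from $S$ together with $(\mathrm{ord}_v x)_{v\in S}$, the number of $x$ in each group is a difference $q^{\ell(E)}-q^{\ell(E')}$ of Riemann--Roch dimensions; substituting $\ell(E)=\deg E+1-g+\ell(W-E)$ for a canonical divisor $W$ splits the contribution into a main part (from $\deg E+1-g$) and a defect part (from $\ell(W-E)$). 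Summing the resulting geometric series in $q^{-s}$ and over the scales at $S$, the main part reassembles exactly into $q^{1-g}\prod_v(\text{local integral})$, i.e.\ into the integral above. Since $\ell(W-E)=0$ once $\deg E>2g-2$, in the remaining sum the part of $E$ away from $S$ runs over a finite set, while the only surviving infinitude is over the scales at $v\in S$, each contributing the tail of a geometric series whose poles again lie on $\{\,s:q^{f_v s}=1\,\}$. Multiplying by $\prod_{v\in S}(1-q^{-f_v s})$ clears those poles and leaves an entire function, which is the assertion; thus $R(s)$ is precisely the Riemann--Roch defect of the naive count.

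The main obstacle is Step three. Steps one and two amount to bookkeeping plus a standard covolume computation, but the sum-versus-integral comparison requires matching the main part of the Riemann--Roch count with the adelic integral term by term, justifying the rearrangements of sums that converge only conditionally for $\mathrm{Re}(s)$ near $2$, and --- most delicately --- verifying that, although fractional powers $q^{-s/d}$ occur in the local data at $S$, the poles of $R(s)$ nevertheless lie only where $q^{f_v s}=1$, so that $\prod_{v\in S}(1-q^{-f_v s})$ suffices to clear them; this is precisely where the hypothesis of mild bad reduction enters. An alternative route is adelic Poisson summation: write $Z_K(\phi,s)$ as the sum over $\xi\in K$ of the adelic Fourier transform of $\widehat{H}_{\phi}(\cdot)^{-s}$ evaluated at $\xi$, so that the $\xi=0$ term is the integral and the remaining convergent sum of products of local Fourier transforms is $R(s)$; but pinning down the pole structure of the sum over $\xi\neq0$ is no easier than the direct argument.
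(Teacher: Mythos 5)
This statement is cited directly from Hsia \cite{Hsia} and is not proved in this paper; what Section~\ref{Hsia_method} does is recap Hsia's method, and your sketch matches it. The core move you identify---group $x\in K$ by its behaviour at $S$ and by the rest of the polar divisor, count each group by Riemann--Roch, let the $\deg E +1 -g$ part reassemble into the adelic integral $q^{1-g}\prod_v\int_{K_v}\widehat H_{\phi,v}^{-s}\,d\mu_v$, and let the $\ell(W-E)$ defect produce $R(s)$---is exactly the mechanism behind Hsia's partial height zeta formula (Theorem~\ref{Hsia_partialheightzeta}), whose correction term $B(q^{-s})/\bigl(\zeta_K(s)\prod_{v\in S}(1-q_v^{-s})\bigr)$ is precisely that defect (finite because the sum is restricted to $\deg(D+E_1)<\eta$, where $\eta$ is a Riemann--Roch threshold), so your proposal takes essentially the same approach.
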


\section{Hsia's method}\label{Hsia_method}

The discussion in this section is entirely due to Hsia \cite{Hsia}, but for the sake of future discussions, let us briefly recap the contents.
Henceforth, we fix the following notation.

\begin{itemize}
  \item $K$ is a function field over a finite field $\mathbb{F}_q$ of genus $g$.
  \item $M_K$ is the set of all places of $K$.
  \item For $v \in M_K$, $K_v$ is the completion of $K$ at $v$.
  \item $\mathcal{O}_v$ is the integer ring of $K_v$.
  \item $q_v$ is the cardinality of the residue field of $K_v$.
  \item $f_v$ is the residue degree at $v \in M_K$. (Note that $q_v = q^{f_v}.$)
\end{itemize}

In addition to the above, we assume the following condition.

\begin{cond}\label{Hsia_condition}
  Let $\phi \in K[z]$ be a polynomial over $K$, and let $S$ be a finite subset of $M_K$ such that outside of $S$, $\phi$ has good reduction. 
  For each $v \in S$, there exists a family of subsets of $K_v$, denoted as $D_v^{(1)}, \dots, D_v^{l_v}$, satisfying the following conditions:
  \begin{itemize}
    \item[$(a)$]Each $D_v^{(i)}$ takes the form of $E\setminus \bigcup_{j=1}^{m} E_j$, where $E$ is either the entire $K_v$ or a closed disk in $K_v$, and $E_j$ represents a closed disk in $K_v$.
    \item[$(b)$]On each $D_v^{(i)}$, $\widehat{\lambda}_{\phi,v}(x) - \lambda_{v}(x)$ is a constant. 
  \end{itemize}
\end{cond}

As an example of a polynomial that satisfies this condition, consider the following:

\begin{exm}\label{computation_local_height}
  Let $d \ge 2$ be an integer greater than or equal to $2$.
  Assume that $f \in K$ satisfies $v(f) < d$ for all $v \in M_K$.
  Then, $\phi(z) = z^d + f^{-1}$ satisfies the condition.
  To see this, we need to observe the non-archimedean dynamical system on $K_v$.
  Put $S = \{v \in M_K \colon \phi \text{ has bad reduction at } v \} = \{v \in M_K \colon v(f) > 0 \}$.
  Then for all $v \in S$ and $x \in K_v$,
  \begin{align*}
    v(\phi(x)) = v(x^d + f^{-1}) = 
    \begin{cases}
      d v(x) & \text{ if } v(x) < 0\\
      - v(f) & \text{ if } v(x) \ge 0
    \end{cases}.
  \end{align*}
  By induction, we obtain
  \begin{align*}
    \log |\phi^n(x)|_v =
    \begin{cases}
      d^n \log |x|_v & \text{ if } |x|_v > 1\\
      - d^{n-1} v(f) \log q_v & \text{ if } |x|_v \le 1
    \end{cases}
  \end{align*}
  By the definition of the local canonical heigt, 
  \begin{align*}
    \widehat{\lambda}_{\phi, v} (x) = 
    \begin{cases}
      \log |x|_v & \text{ if } |x|_v > 1\\
      \frac{v(f)}{d} \log q_v & \text{ if } |x|_v \le 1
    \end{cases}.
  \end{align*}
  Therefore, 
  \begin{align*}
    \widehat{\lambda}_{\phi, v} (x) - \lambda_{v} (x) = 
    \begin{cases}
      0 & \text{ if } |x|_v > 1\\
      \frac{v(f)}{d} \log q_v & \text{ if } |x|_v \le 1
    \end{cases}.
  \end{align*}
\end{exm}

Hsia proposed a method for computing the dynamical height zeta function for polynomials that satisfy Condition \ref{Hsia_condition}.
The outline of the discussion is as follows:

Firstly, assuming that Condition \ref{Hsia_condition} holds, we take a map $i \colon S \to \mathbb{Z}$ such that for each $v \in S$, $1 \le i(v) \le l_v$, and then define sets $\mathbb{D}_i$ as 
$ \mathbb{D}_i = \{x \in K \colon x \in D_{v}^{(i(v))} \text{ for all } v \in S\}.$
Under these conditions, the following properties hold:
\begin{itemize}
  \item[$(a)$] 
  $K$ is partitioned into a disjoint union in the form of $K = \bigcup_{i} \mathbb{D}_i$.
  \item[$(b)$] On $\mathbb{D}_i$, $\widehat{H}_{\phi}(x)/H_K(x)$ takes a constant value $\rho_i$.
\end{itemize}
Then, $Z_K(\phi, s)$ can be decomposed as follows.
\[ Z_{K}(\phi,s) = \sum_{x \in K} \widehat{H}_{\phi}(x)^{-s} = \sum_{i} \sum_{x \in \mathbb{D}_i} \widehat{H}_{\phi}(x)^{-s} = \sum_{i} \rho_i^{-s}\sum_{x \in \mathbb{D}_i} H_{K}(x)^{-s}.\]
Therefore, the study of $Z_K(\phi, s)$ is reduced to the investigation of the following partial height zeta function.

\begin{dfn}
  For a subset $\mathbb{D}$ of $K$, we define $W(\mathbb{D},s)$ as follows.
  \[ W(\mathbb{D},s) = \sum_{x\in \mathbb{D}} H_K(x)^{-s}. \]
\end{dfn}

Let $S$ be a finite subset of $M_K$, and for each $v \in S$, let $n_v$ be an integer and $c_v$ be an element in $K_v$. 
Then, consider $\mathbb{D}$ as a subset of $K$ in the following form.
\[\mathbb{D} = \{ x \in K \colon v(x - c_v) \ge n_v \text{ for all } v \in S\}.\]
Hsia computed the partial height zeta function for $\mathbb{D}$ in this form. 
To state the claims, let's introduce some symbols.

First, define $\mathbb{D} \subset K$ and $\mathbb{A}_{\mathbb{D}} \subset \mathbb{A}_{K}$ as follows:
\begin{align*}
  \mathbb{D} &= \{ x \in K \colon v(x - c_v) \ge n_v \text{ for all } v \in S\}, \\
  \mathbb{A}_{\mathbb{D}} &= \{ x = (x_v)_v \in \mathbb{A}_K \colon v(x_v - c_v) \ge n_v \text{ for all } v \in S\}.
\end{align*}
Next, define the divisors $E, E^{0}, E^{\infty}, \mathscr{E}, \mathscr{E}'$ as follows:
\begin{itemize}
  \item $E = \sum_{v \in S} n_v \cdot v$.
  \item $\sum_{v \in S} \left(\inf_{x \in \mathbb{D}} v(x) \right) \cdot v = E^{0} - E^{\infty}$, and $E^{0}, E^{\infty} \ge 0$ with $\mathrm{Supp}(E^{0}) \cap \mathrm{Supp}(E^{0}) = \varnothing$.
  \item $E^{\infty} = \mathscr{E} + \mathscr{E}'$, $\mathrm{Supp}(\mathscr{E}') = \{v \in \mathrm{Supp}(E^{\infty}) \colon v(E^{\infty}) = -n_v\}, \mathrm{Supp}(\mathscr{E}) = \{v \in \mathrm{Supp}(E^{\infty}) \colon v(E^{\infty}) > -n_v\}$
\end{itemize}

\begin{thm}[{\cite[Theorem 2.1.]{Hsia}}]\label{Hsia_partialheightzeta}
  Let $S \subset M_K$ be a finite subset of $M_K$. For $n_v \in \mathbb{Z}$ and $c_v \in K_v$ for all $v \in S$, we define    
  \begin{align*}
  \mathbb{D} &= \{ x \in K \colon v(x - c_v) \ge n_v \text{ for all } v \in S\}, \\
  \mathbb{A}_{\mathbb{D}} &= \{ x = (x_v)_v \in \mathbb{A}_K \colon v(x_v - c_v) \ge n_v \text{ for all } v \in S\}.
  \end{align*}
  Then, 
  \[ W(\mathbb{D}, s) = \int_{\mathbb{A}_{\mathbb{D}}} H_K(x)^{-s} d\mu (x) + \frac{B(q^{-s})}{\zeta_{K}(s)\prod_{v \in S}(1 - q_v^{-s})},\]
  where
  \[B(t) = t^{\deg \mathscr{E}} \sum_{0 \le E_1 \le \mathscr{E}} t^{\deg E_1} \prod_{v \in \mathrm{Supp}(\mathscr{E}-E_1)} (1-t^{f_v}) \sum_{\deg(D + E_1) < \eta} \delta(D + E_1) t^{\deg D},\]
  \[\delta(D) = \# \{x \in \mathbb{D} \colon (x)_{\infty} \le D + \mathscr{E}\} - q^{1-g+\deg (E+\mathscr{E}')}t^{\deg D}, \text{ and }\]
  \[ \eta = \deg(E + \mathscr{E}') + 2g - 1.\] 
\end{thm}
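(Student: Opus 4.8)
The plan is to pass to the variable $t = q^{-s}$, interpret $W(\mathbb{D},s)$ as a power series in $t$, and evaluate it by a Riemann--Roch count. The starting observation is that for a function field $K/\mathbb{F}_q$ one has $H_K(x) = q^{\deg (x)_{\infty}}$ for $x \in K^{\times}$ (with $H_K(0) = 1$), where $(x)_{\infty}$ is the polar divisor of $x$; hence $W(\mathbb{D},s) = \sum_{x \in \mathbb{D}} t^{\deg (x)_{\infty}}$, and everything reduces to counting points of $\mathbb{D}$ whose polar divisor is constrained. First I would analyse the forced polar behaviour on $\mathbb{D}$: by strong approximation $\mathbb{D} \neq \varnothing$, and at a place $v \in S$ with $v(c_v) < n_v$ the ultrametric inequality forces $v(x) = v(c_v)$, which is how the divisor $E^{\infty}$ and its refinement $E^{\infty} = \mathscr{E} + \mathscr{E}'$ arise; consequently every $x \in \mathbb{D}$ has $(x)_{\infty} = \mathscr{E} + F$ with $F \ge 0$, $F$ disjoint from $\mathrm{Supp}(\mathscr{E})$ and $F \le \mathscr{E}'$ on $\mathrm{Supp}(\mathscr{E}')$. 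This pulls out the prefactor $t^{\deg \mathscr{E}}$ and leaves a sum over the ``free'' polar divisors $F$.

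The core step is the count underlying $\delta$: for an effective divisor $D$, the set $\{x \in \mathbb{D} : (x)_{\infty} \le D + \mathscr{E}\}$, when nonempty, is a coset of the $\mathbb{F}_q$-vector space $L(D'')$ for an explicit divisor $D''$ determined by $D$, $\mathscr{E}$ and the bounds $n_v$, so Riemann--Roch gives its cardinality as $q^{1 - g + \deg(E + \mathscr{E}') + \deg D}$ once $\deg D$ exceeds the Riemann--Roch threshold (which, after all corrections are accounted for, is $\eta = \deg(E + \mathscr{E}') + 2g - 1$), while for smaller $\deg D$ it differs from this main term by the finitely supported error $\delta(D)$. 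I would then assemble $W(\mathbb{D},s)$ from these counts: passing from the inequality ``$(x)_{\infty}\le\cdot$'' to the equality ``$(x)_{\infty}=\cdot$'' is an inclusion--exclusion over the free places that produces the factors $\prod_v (1 - t^{f_v})$, and on $\mathrm{Supp}(\mathscr{E})$ a second inclusion--exclusion over subdivisors $0 \le E_1 \le \mathscr{E}$ is needed because the pole there equals $\mathscr{E}$ only generically. After these manipulations $W(\mathbb{D},s)$ splits as a main part coming from the Riemann--Roch leading term (an absolutely convergent series in $t$, summable in closed form) plus exactly the polynomial $t^{\deg \mathscr{E}}\sum_{0 \le E_1 \le \mathscr{E}} t^{\deg E_1}\prod_{v \in \mathrm{Supp}(\mathscr{E} - E_1)}(1 - t^{f_v})\sum_{\deg(D+E_1)<\eta}\delta(D+E_1)t^{\deg D}$, which is a genuine polynomial in $t$ precisely because $\delta$ is supported on divisors of bounded degree.

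It remains to identify the main part with the adelic integral $\int_{\mathbb{A}_{\mathbb{D}}} H_K(x)^{-s}\,d\mu(x)$. Since $\mathbb{A}_{\mathbb{D}}$ is a restricted product --- the congruence disk $\{v(x_v - c_v) \ge n_v\}$ at $v \in S$ and all of $K_v$ at $v \notin S$ --- and $\mu$ is $q^{1-g}$ times the product of the local $\mu_v$, the integral factors into an Euler product; recombining the factors gives $q^{1-g}\zeta_K(s-1)/\zeta_K(s)$ times finitely many local disk integrals at $S$, exactly as in Theorem \ref{Hsia_main}, and a term-by-term comparison with the geometric series extracted in the previous step shows the two agree. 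Subtracting yields the asserted shape, with denominator $\zeta_K(s)\prod_{v \in S}(1 - q_v^{-s})$, these being precisely the common denominator of the Riemann--Roch error series and the local disk integrals.

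The step I expect to be the main obstacle is the combinatorial bookkeeping in the middle paragraph: keeping the forced-pole places $\mathrm{Supp}(\mathscr{E})$, the boundary places $\mathrm{Supp}(\mathscr{E}')$ and the free places $M_K \setminus S$ rigorously separated while running the two nested inclusion--exclusions, and correctly locating $\eta$ so that $\delta$ really is supported in degrees $< \eta$. The local and adelic integral computations, by contrast, are routine once the $v$-adic analysis (in the spirit of Example \ref{computation_local_height}) has been carried out.
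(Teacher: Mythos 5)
The paper does not prove this statement; it is imported verbatim from Hsia's article (\cite[Theorem 2.1]{Hsia}) and used as a black box. There is therefore no ``paper proof'' for you to match, and any assessment has to be of the sketch on its own terms.

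Your overall strategy is the natural one and is almost certainly the one Hsia follows: rewrite $W(\mathbb{D},s)=\sum_{x\in\mathbb{D}}t^{\deg(x)_\infty}$ with $t=q^{-s}$, exploit the ultrametric inequality to strip off the forced polar part $\mathscr{E}$, count the remaining finitely constrained sets via Riemann--Roch, and identify the Riemann--Roch leading term with the adelic integral. The first paragraph (forced poles at $v$ with $v(c_v)<n_v$, the decomposition $E^\infty=\mathscr{E}+\mathscr{E}'$, the shape $(x)_\infty=\mathscr{E}+F$) is correct. The identification of $\{x\in\mathbb{D}:(x)_\infty\le D+\mathscr{E}\}$ with a coset of a Riemann--Roch space $L(D'')$ is also correct in principle, though you neither pin down $D''$ nor address nonemptiness (strong approximation is the right tool, but it has to be invoked carefully because the conditions defining $\mathbb{D}$ are not purely congruential once a divisor bound is imposed).

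Where your sketch goes wrong is the explanation of the $E_1$-sum. You say a second inclusion--exclusion over $0\le E_1\le\mathscr{E}$ arises ``because the pole there equals $\mathscr{E}$ only generically.'' That contradicts your own first paragraph: by the ultrametric inequality the pole at every $v\in\mathrm{Supp}(\mathscr{E})$ equals $v(\mathscr{E})$ for \emph{every} $x\in\mathbb{D}$, not just generically, so the set $\{x\in\mathbb{D}:(x)_\infty\le D+E_1+\mathscr{E}\}$ is independent of $E_1\ge 0$ supported in $\mathrm{Supp}(\mathscr{E})$. Observe moreover that $\sum_{0\le E_1\le\mathscr{E}}t^{\deg E_1}\prod_{v\in\mathrm{Supp}(\mathscr{E}-E_1)}(1-t^{f_v})=1$ as an Euler product over $\mathrm{Supp}(\mathscr{E})$; the $E_1$-sum is therefore a telescoping device whose purpose is to rebalance the Riemann--Roch main term (which \emph{does} depend on $\deg(D+E_1)$) against the constant cardinality, not an inclusion--exclusion over polar configurations. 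Your motivation for this feature of the formula is thus incorrect, and this is exactly the combinatorial step you flagged as the main obstacle, so the sketch leaves the central technical point unresolved.
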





\section{Explicit computation of the dynamical height zeta function}

In this section, we derive an explicit formula for the dynamical height zeta function when the genus of $K$ is 0 or 1 and the dynamical system satisfies Condition \ref{Hsia_condition}
Basically, the calculations in this section are a refinement of Hsia's argument.
In particular, the central discussion is a more explicit computation of Theorem \ref{Hsia_partialheightzeta}.
Throughout this section, we consider $K$ as a function field over $\mathbb{F}_q$ with genus 0 or 1.

\begin{lem}
  Let $U \subset M_K$ be a finite subset of $M_K$. Let $\mathbb{D}(U) \subset K$ be a subset of $K$ defined by 
  $ \mathbb{D}(U) = \{ x \in K \colon v(x) \ge 0 \text{ for all } v \in U\}$
  and let $\mathbb{A}(U) \subset \mathbb{A}_K$ be a subset of $\mathbb{A}_K$ defined by
  $ \mathbb{A}(U) = \{ x = (x_v)_v \in \mathbb{A}_K \colon v(x_v) \ge 0 \text{ for all } v \in U\}$
  Then, 
  \[ W(\mathbb{D}(U), s) = \int_{\mathbb{A}(U)}H_K(x)^{-s} d\mu(x) + \frac{c(g)}{\zeta_K(s)\prod_{v \in U} (1 - q_v^{-s})}\]
\end{lem}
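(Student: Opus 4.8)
The plan is to read the statement as the degenerate case $S=U$, $n_v=0$, $c_v=0$ (all $v\in U$) of Hsia's Theorem~\ref{Hsia_partialheightzeta}. With these choices the set $\mathbb{D}$ of that theorem is precisely $\mathbb{D}(U)$ and $\mathbb{A}_{\mathbb{D}}$ is precisely $\mathbb{A}(U)$, so Theorem~\ref{Hsia_partialheightzeta} already hands us
\[ W(\mathbb{D}(U),s)=\int_{\mathbb{A}(U)}H_K(x)^{-s}\,d\mu(x)+\frac{B(q^{-s})}{\zeta_K(s)\prod_{v\in U}(1-q_v^{-s})}. \]
Hence the entire content of the lemma is the assertion $B(q^{-s})=c(g)$, that is, $B\equiv 0$ when $g=0$ and $B\equiv q-1$ when $g=1$.

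First I would pin down the auxiliary divisors attached to this particular $\mathbb{D}(U)$. Since every $n_v$ is $0$ we get $E=\sum_{v\in U}0\cdot v=0$. The constant $1$ lies in $\mathbb{D}(U)$ with $v(1)=0$, while by definition $v(x)\ge 0$ for all $x\in\mathbb{D}(U)$ and $v\in U$; thus $\inf_{x\in\mathbb{D}(U)}v(x)=0$ for each $v\in U$, so the divisor $\sum_{v\in U}\bigl(\inf_{x}v(x)\bigr)\cdot v$ is zero. Consequently $E^{0}=E^{\infty}=0$, hence $\mathscr{E}=\mathscr{E}'=0$ and $\eta=\deg(E+\mathscr{E}')+2g-1=2g-1$. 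Plugging $\mathscr{E}=0$ into the formula for $B(t)$ collapses it: the prefactor $t^{\deg\mathscr{E}}$ is $1$, the sum over $0\le E_1\le\mathscr{E}$ contributes only $E_1=0$, and the product over the empty set $\mathrm{Supp}(\mathscr{E}-E_1)$ is $1$, so that
\[ B(t)=\sum_{\deg D<2g-1}\delta(D)\,t^{\deg D},\qquad \delta(D)=\#\{x\in\mathbb{D}(U)\colon (x)_{\infty}\le D\}-q^{1-g+\deg D}. \]

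It then remains to evaluate this finite sum in the two cases. For $g=0$ one has $\eta=-1$, so there is no effective divisor $D$ with $\deg D<\eta$ and the sum is empty; hence $B\equiv 0=c(0)$. For $g=1$ one has $\eta=1$, so the only contributing term is $D=0$; here $\{x\in\mathbb{D}(U)\colon (x)_{\infty}\le 0\}$ is the Riemann--Roch space $L(0)$, which equals the field of constants $\mathbb{F}_q$ (constants are integral at every place, hence lie in $\mathbb{D}(U)$), so this set has $q$ elements and $\delta(0)=q-q^{1-1+0}=q-1$. Thus $B(t)=(q-1)t^{0}=q-1=c(1)$, and substituting $t=q^{-s}$ yields the claimed formula.

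The only real difficulty is bookkeeping: one must carry Hsia's divisor constructions faithfully through this degenerate input and check that the various sums and products in the definition of $B$ are genuinely trivial. No idea beyond Theorem~\ref{Hsia_partialheightzeta} is required; note that identifying $\{x\colon (x)_{\infty}\le 0\}$ with $\mathbb{F}_q$ tacitly uses that $\mathbb{F}_q$ is the exact constant field of $K$, which is part of the standing hypotheses.
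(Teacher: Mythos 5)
Your proof is correct and takes essentially the same approach as the paper: both specialize Theorem~\ref{Hsia_partialheightzeta} with $n_v=0$ and $c_v=0$, observe that all the auxiliary divisors $E,E^{0},E^{\infty},\mathscr{E},\mathscr{E}'$ vanish so $B(t)$ collapses to $\sum_{\deg D<2g-1}\delta(D)t^{\deg D}$, and evaluate this as $0$ for $g=0$ and as $\delta(0)=\#\mathbb{F}_q-q^{1-g}=q-1$ for $g=1$. Your version is a bit more explicit in justifying $\inf_x v(x)=0$ and identifying $\{x\colon(x)_\infty\le 0\}$ with the constant field, but the argument is the same.
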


\begin{proof}
  Using the notation from Section \ref{Hsia_method}, we obtain
  \[ E = E^{0} = E^{\infty} = \mathscr{E} = \mathscr{E}' = 0\]
  for this situation. By theorem \ref{Hsia_partialheightzeta},
  \[ W(\mathbb{D}(U), s) = \int_{\mathbb{A}(U)}H_K(x)^{-s} d\mu(x) + \frac{B(q^{-s})}{\zeta_K(s)\prod_{v \in U} (1 - q_v^{-s})}, \]
  where 
  \begin{align*}
    &B(t)\\
    =& t^{\deg \mathscr{E}} \sum_{0 \le E_1 \le \mathscr{E}} t^{\deg E_1} \prod_{v \in \mathrm{Supp}(\mathscr{E}-E_1)} (1-t^{f_v}) \sum_{\deg(D + E_1) < \eta} \delta(D + E_1) t^{\deg D}\\
    =& \sum_{\deg D < 2g - 1} \delta(D) t^{\deg D}.
  \end{align*}
  If $g = 0$, then $B(t)$ is an empty sum and equals $0$. If $g = 1$, then 
  \[B(t) = \delta(0) = \# \{ x \in \mathbb{D}(U) \colon (x)_{\infty} = 0\} - q^{1-g} = \# \mathbb{F}_q - 1 = q - 1.\]
  Therefore, $B(t) = c(g)$ holds true.
\end{proof}

\begin{lem}\label{zeta_on_affinoids}
  Let $S \subset M_K$ be a finite subset of $M_K$. 
  For a subset $T \subset S$, we define a subset $\mathbb{D}_T \subset K$ and a subset $\mathbb{A}_T \subset\mathbb{A}_K$ as
  \[ \mathbb{D}_T = \{ x \in K \colon v(x) \ge 0 \text{ for all } v \in T, v(x) < 0 \text{ for all } v \in S \setminus T\},\]
  \[ \mathbb{A}_T = \{ x = (x_v)_v \in \mathbb{A}_K \colon v(x_v) \ge 0 \text{ for all } v \in T, v(x_v) < 0 \text{ for all } v \in S \setminus T\},\]
  respectively. Then, 
  \[ W(\mathbb{D}_T, s) = \int_{\mathbb{A}_T} H_K(x)^{-s} d\mu(x) + \frac{c(g)\prod_{v \in S \setminus T} (-q_v^{-s})}{\zeta_K(s) \prod_{v \in S} (1 - q_v^{-s})}\]
\end{lem}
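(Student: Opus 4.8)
The plan is to reduce the computation of $W(\mathbb{D}_T, s)$ to the previous lemma by an inclusion–exclusion over the places in $S \setminus T$. For a subset $U$ with $T \subseteq U \subseteq S$, write $\mathbb{D}(U) = \{x \in K : v(x) \ge 0 \text{ for all } v \in U\}$ as in the previous lemma, and similarly $\mathbb{A}(U) \subseteq \mathbb{A}_K$. The key observation is the decomposition
\[
\mathbb{D}(T) = \bigsqcup_{T \subseteq U \subseteq S} \mathbb{D}_{U},
\]
since every $x$ with $v(x) \ge 0$ for all $v \in T$ belongs to exactly one $\mathbb{D}_U$, namely for $U = \{v \in S : v(x) \ge 0\}$. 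Möbius inversion on the Boolean lattice of subsets between $T$ and $S$ then gives
\[
W(\mathbb{D}_T, s) = \sum_{T \subseteq U \subseteq S} (-1)^{\#(U \setminus T)} W(\mathbb{D}(U), s),
\]
and the analogous identity holds for the adelic integrals $\int_{\mathbb{A}(U)} H_K(x)^{-s}\, d\mu(x)$ since the adelic sets decompose the same way and $\mu$ is additive. First I would verify these two disjoint-decomposition/inversion statements carefully (this is elementary but should be spelled out).

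Next I would substitute the formula of the previous lemma for each $W(\mathbb{D}(U), s)$. The integral terms reassemble into $\int_{\mathbb{A}_T} H_K(x)^{-s}\, d\mu(x)$ by the adelic version of the inversion. It remains to handle the sum of the correction terms, namely
\[
\sum_{T \subseteq U \subseteq S} (-1)^{\#(U \setminus T)} \frac{c(g)}{\zeta_K(s) \prod_{v \in U}(1 - q_v^{-s})}.
\]
Pulling out $c(g)/\big(\zeta_K(s)\prod_{v \in T}(1 - q_v^{-s})\big)$, the remaining factor is
\[
\sum_{U' \subseteq S \setminus T} (-1)^{\#U'} \prod_{v \in U'} \frac{1}{1 - q_v^{-s}} = \prod_{v \in S \setminus T}\left(1 - \frac{1}{1 - q_v^{-s}}\right) = \prod_{v \in S \setminus T} \frac{-q_v^{-s}}{1 - q_v^{-s}},
\]
where I have used that a sum over subsets of a product factors into a product over the ground set. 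Multiplying back in $\prod_{v \in T}(1 - q_v^{-s})$ and recombining with $\prod_{v \in S \setminus T}(1 - q_v^{-s})$ in the denominator yields exactly
\[
\frac{c(g)\prod_{v \in S \setminus T}(-q_v^{-s})}{\zeta_K(s)\prod_{v \in S}(1 - q_v^{-s})},
\]
which is the claimed correction term.

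The only genuine subtlety — and the step I would be most careful about — is the adelic inversion: one must check that $\mathbb{A}(T) = \bigsqcup_{T \subseteq U \subseteq S} \mathbb{A}_U$ really is a disjoint union up to measure zero (the conditions $v(x_v) \ge 0$ versus $v(x_v) < 0$ partition $K_v$ exactly, with no overlap, so this is in fact an honest disjoint union), and that the integral $\int_{\mathbb{A}(U)} H_K(x)^{-s}\,d\mu(x)$ is finite for $\mathrm{Re}(s)$ large so that splitting the integral is legitimate. Both follow from the absolute convergence already implicit in Theorem \ref{Hsia_partialheightzeta}. Everything else is the finite combinatorial identity above, which needs no convergence hypotheses. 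Note also that when $T = S$ the sum collapses to the single term $U = S$ and the statement reduces to the previous lemma, as it should.
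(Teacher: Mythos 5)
Your proposal is correct and follows essentially the same route as the paper: an inclusion--exclusion (Möbius inversion) over subsets between $T$ and $S$ to express $W(\mathbb{D}_T,s)$ in terms of the $W(\mathbb{D}(U),s)$ from the previous lemma, followed by factoring the alternating sum of correction terms into $\prod_{v \in S\setminus T}\bigl(1-(1-q_v^{-s})^{-1}\bigr)$. The only cosmetic difference is that you index by $U$ with $T\subseteq U\subseteq S$ where the paper indexes by $U\subseteq S\setminus T$ via $U\mapsto T\cup U$, and you spell out the disjoint-decomposition justification more carefully than the paper does.
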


\begin{proof}
  By the inclusion-exclusion principle, we obtain
  \[ \mathbb{D}_T = \bigcup_{U \subset S \setminus T} (-1)^{\# U} \mathbb{D}(T \cup U ), \mathbb{A}_T = \bigcup_{U \subset S \setminus T} (-1)^{\# U} \mathbb{A}(T \cup U ) .\]
  \begin{align*}
    & W(\mathbb{D}_T, s)\\
    =& \sum_{U \subset S \setminus T} (-1)^{\# U} W(\mathbb{D}(T \cup U), s)\\
    =& \sum_{U \subset S \setminus T} (-1)^{\# U} \left( \int_{\mathbb{A}(T \cup U)} H_K(x)^{-s} d\mu(x) + \frac{c(g)}{\zeta_K(s) \prod_{v \in T \cup U} (1 - q_v^{-s})} \right)\\
    =& \int_{\mathbb{A}_T} H_K(x)^{-s} d\mu(x) + \sum_{U \subset S \setminus T} (-1)^{\# U}\frac{c(g)}{\zeta_K(s) \prod_{v \in T \cup U} (1 - q_v^{-s})}
  \end{align*}
  Therefore, 
  \begin{align*}
    & W(\mathbb{D}_T, s) - \int_{\mathbb{A}_T} H_K(x)^{-s} d\mu(x)\\
    =& \frac{c(g)}{\zeta_K(s) \prod_{v \in T}(1 - q_v^{-s})}\sum_{U \subset S \setminus T} (-1)^{\# U}\prod_{v \in U} (1 - q_v^{-s})^{-1}\\
    =& \frac{c(g)}{\zeta_K(s) \prod_{v \in T}(1 - q_v^{-s})}\prod_{v \in S \setminus T} (1 - (1 - q_v^{-s})^{-1})\\
    =& \frac{c(g)}{\zeta_K(s) \prod_{v \in T}(1 - q_v^{-s})}\prod_{v \in S \setminus T} \frac{-q_v^{-s}}{1 - q_v^{-s}}\\
    =& \frac{c(g)\prod_{v \in S \setminus T} (-q_v^{-s})}{\zeta_K(s) \prod_{v \in S} (1 - q_v^{-s})}
  \end{align*}
\end{proof}

In addition, we further assume the following:
\begin{itemize}
  \item An element $f \in \mathcal{O}_K$ satisfies $v(f) < d$ for each $v \in M_K$.
  \item $\phi(z) = z^d + f^{-1}$.
\end{itemize}
Furthermore, let $S = \{v \in M_K \colon v(f) > 0\} = \{v \in M_K \colon \phi \text{ has bad reduction at } v \}$.
In this case, as can be deduced from Example \ref{computation_local_height}, the following holds.

\begin{lem}\label{computation_of_integration}
  Under the above assumptions, we have:
  \[ \int_{\mathbb{A}_K}\widehat{H}_{\phi}(x)^{-s} d\mu(x) = q^{1-g}\frac{\zeta_K(s-1)}{\zeta_K(s)} \prod_{v \in S} \frac{u_v^{v(f)} + (q_v - 1)u_v^d - q_v u_v^{d + v(f)}}{1 - u_v^d}\]
  where $u_v = q_v^{-s/d}$.
\end{lem}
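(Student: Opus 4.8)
The plan is to compute the adelic integral $\int_{\mathbb{A}_K}\widehat{H}_{\phi}(x)^{-s}\,d\mu(x)$ by reducing it to a product of local integrals over $K_v$ and evaluating each factor explicitly. First I would invoke Hsia's factorization: from Theorem \ref{Hsia_main}, the adelic integral equals
\[ q^{1-g}\frac{\zeta_K(s-1)}{\zeta_K(s)}\prod_{v\in S}\frac{\int_{K_v}\widehat{H}_{\phi,v}(x)^{-s}\,d\mu_v(x)}{\int_{K_v}H_v(x)^{-s}\,d\mu_v(x)}, \]
so the task splits into two routine local computations, one for the "bad" numerator and one for the "good" denominator.

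Next I would compute the denominator. Partitioning $K_v$ by the level sets $|x|_v\le 1$ and $|x|_v = q_v^{m}$ for $m\ge 1$, on which $H_v(x)=\max\{1,|x|_v\}$ equals $1$ and $q_v^m$ respectively, and using that the annulus $|x|_v=q_v^m$ has $\mu_v$-measure $q_v^m - q_v^{m-1}=q_v^m(1-q_v^{-1})$, one gets
\[ \int_{K_v}H_v(x)^{-s}\,d\mu_v(x) = 1 + (1-q_v^{-1})\sum_{m\ge 1}q_v^{m}q_v^{-ms} = 1 + \frac{(1-q_v^{-1})q_v^{1-s}}{1-q_v^{1-s}} = \frac{1-q_v^{-s}}{1-q_v^{1-s}}. \]
For the numerator I would use the explicit formula for $\widehat{\lambda}_{\phi,v}$ from Example \ref{computation_local_height}: on $|x|_v\le 1$ we have $\widehat{H}_{\phi,v}(x) = q_v^{v(f)/d}$, while on $|x|_v = q_v^m$ (for $m\ge 1$) we have $\widehat{H}_{\phi,v}(x)=|x|_v=q_v^m$, so
\[ \int_{K_v}\widehat{H}_{\phi,v}(x)^{-s}\,d\mu_v(x) = q_v^{-sv(f)/d} + (1-q_v^{-1})\sum_{m\ge 1}q_v^m q_v^{-ms} = u_v^{v(f)} + \frac{(1-q_v^{-1})q_v^{1-s}}{1-q_v^{1-s}}, \]
writing $u_v=q_v^{-s/d}$ so that $q_v^{-s}=u_v^d$ and $q_v^{1-s}=q_v u_v^d$.

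Then I would form the ratio of numerator to denominator. Multiplying through by $1-q_v^{1-s}=1-q_v u_v^d$ in both, the denominator becomes $1-u_v^d$ and the numerator becomes $u_v^{v(f)}(1-q_v u_v^d) + (1-q_v^{-1})q_v u_v^d = u_v^{v(f)} + (q_v-1)u_v^d - q_v u_v^{d+v(f)}$, which is exactly the local factor in the statement. Taking the product over $v\in S$ and reinserting the prefactor $q^{1-g}\zeta_K(s-1)/\zeta_K(s)$ gives the claimed formula; the convergence of the sums for $\mathrm{Re}(s)$ large justifies the manipulations, and meromorphic continuation is already supplied by Theorem \ref{Hsia_main}. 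The only mild subtlety — not really an obstacle — is bookkeeping the substitution $u_v=q_v^{-s/d}$ consistently (note $q_v^{-s}=u_v^d$, not $u_v$), and checking that the annulus-measure computation and the good-reduction identity $\widehat{H}_{\phi,v}=H_v$ outside $S$ are correctly matched to the product over $v\in S$ in Hsia's factorization; everything else is a direct geometric-series evaluation.
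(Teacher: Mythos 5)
Your proposal is correct and follows essentially the same route as the paper: invoke Theorem \ref{Hsia_main} to factor the adelic integral into a product of local ratios over $v\in S$, compute the two local integrals $\int_{K_v}H_v^{-s}d\mu_v=\frac{1-q_v^{-s}}{1-q_v^{1-s}}$ and $\int_{K_v}\widehat{H}_{\phi,v}^{-s}d\mu_v = q_v^{-sv(f)/d}+\frac{(q_v-1)q_v^{-s}}{1-q_v^{1-s}}$ using the piecewise description of $\widehat{H}_{\phi,v}$ from Example \ref{computation_local_height}, and simplify the ratio. You spell out the annulus-measure computation that the paper leaves implicit, and your form $(1-q_v^{-1})q_v^{1-s}$ of the geometric-series tail is the same as the paper's $(q_v-1)q_v^{-s}$; otherwise the argument is identical.
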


\begin{proof}
  By Theorem \ref{Hsia_main},
  \[ \int_{\mathbb{A}_K}\widehat{H}_{\phi}(x)^{-s} d\mu(x) = q^{1-g} \frac{\zeta_{K}(s-1)}{\zeta_{K}(s)}\prod_{v \in S} \frac{\int_{K_v} \widehat{H}_{\phi, v} (x)^{-s} d \mu_v(x)}{\int_{K_v} H_{v} (x)^{-s} d \mu_v(x)}.\]
  Therefore, it suffices to compute the integrals for each $v \in S$.
  \[\int_{K_v} H_{v} (x)^{-s} d \mu_v(x) = \frac{1-q_v^{-s}}{1-q_v^{1-s}}\]
  and, as seen in Example 3.2,
  \begin{align*}
  \widehat{H}_{\phi,v}(x) = 
    \begin{cases}
      |x|_v & \text{ if } |x|_v > 1\\
      q_v^{v(f)/d} & \text{ if } |x|_v \le 1
    \end{cases}
  \end{align*}
  so that
  \[\int_{K_v} \widehat{H}_{\phi,v} (x)^{-s} d \mu_v(x) = q_v^{-sv(f)/d} + \frac{(q_v-1)q_v^{-s}}{1-q_v^{1-s}}.\]
  Therefore, 
  \[\frac{\int_{K_v} \widehat{H}_{\phi, v} (x)^{-s} d \mu_v(x)}{\int_{K_v} H_{v} (x)^{-s} d \mu_v(x)} = \frac{u_v^{v(f)} + (q_v - 1)u_v^d - q_v u_v^{d + v(f)}}{1 - u_v^d}.\]
\end{proof}

Based on the above discussion, we are now prepared to explicitly compute the dynamical height zeta function.

\begin{thm}\label{explicit_formula_genus_1}
  Let $K$ be a funciton field of a curve over a fintie field $\mathbb{F}_q$ of genus $0$ or $1$ and let $d \ge 2$ be an integer greater than or equal to $2$.
  Assume that $f \in \mathcal{O}_K$ satisfies $v(f) < d$ for all $v \in M_K$ and put $S = \{v \in M_K \colon v(f) > 0\}$. 
  Then, the dynamical height zeta function of $\phi(z) =z^d + f^{-1}$ is given by the next formula.
  \[Z_K(\phi, s) = q^{1-g}\frac{\zeta_K(s-1)}{\zeta_K(s)} \prod_{v \in S} \frac{u_v^{v(f)} + (q_v - 1)u_v^d - q_v u_v^{d + v(f)}}{1 - u_v^d} + \frac{c(g)}{\zeta_K(s)} \prod_{v \in S} \frac{u_v^{v(f)} - u_v^d}{1 - u_v^d}.\]
  Here, $\zeta_K$ represents the Dedekind zeta function of $K$, $q_v = q^{f_v}$ stands for the cardinality of the residue field at $v \in M_K$, $u_v = q_v^{-s/d}$, $c(0)=0$, and $c(1) = q-1$.
\end{thm}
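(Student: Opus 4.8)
The plan is to specialize Hsia's decomposition from Section \ref{Hsia_method} to $\phi(z) = z^d + f^{-1}$ and then feed the resulting pieces into Lemma \ref{zeta_on_affinoids} and Lemma \ref{computation_of_integration}. By Example \ref{computation_local_height}, for each $v \in S$ the completion $K_v$ breaks into the closed disk $\{x : v(x) \ge 0\}$ and its complement $\{x : v(x) < 0\}$, and on each of these $\widehat{\lambda}_{\phi,v} - \lambda_{v}$ is constant; this is Condition \ref{Hsia_condition} with $l_v = 2$, the two pieces having the required shape (a). Accordingly I would index Hsia's partition of $K$ by subsets $T \subseteq S$, letting $T$ record the places where $v(x) \ge 0$, so that the pieces are exactly the sets $\mathbb{D}_T$ of Lemma \ref{zeta_on_affinoids} and $K = \bigcup_{T \subseteq S}\mathbb{D}_T$ disjointly. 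On $\mathbb{D}_T$, since for $v \in S \setminus T$ one has $|x|_v > 1$ (local difference $0$) while for $v \in T$ the local difference equals $\tfrac{v(f)}{d}\log q_v$, the ratio $\widehat{H}_{\phi}(x)/H_K(x)$ is the constant $\rho_T = \prod_{v \in T} q_v^{v(f)/d}$. Hence
\[ Z_K(\phi, s) = \sum_{T \subseteq S} \rho_T^{-s}\, W(\mathbb{D}_T, s) = \sum_{T \subseteq S} \Bigl(\prod_{v \in T} u_v^{v(f)}\Bigr) W(\mathbb{D}_T, s), \]
using $\rho_T^{-s} = \prod_{v \in T} q_v^{-s v(f)/d} = \prod_{v \in T} u_v^{v(f)}$.

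Substituting the formula for $W(\mathbb{D}_T, s)$ from Lemma \ref{zeta_on_affinoids} splits this sum into an adelic-integral part and a residue part. For the integral part, I would observe that on the adelic piece $\mathbb{A}_T$ one has, place by place, $\widehat{H}_{\phi,v}(x_v) = q_v^{v(f)/d}$ and $H_{v}(x_v) = 1$ when $v \in T$, and $\widehat{H}_{\phi,v}(x_v) = H_{v}(x_v)$ otherwise, so that $\widehat{H}_{\phi} = \rho_T\, H_K$ identically on $\mathbb{A}_T$. Since the $\mathbb{A}_T$ partition $\mathbb{A}_K$,
\[ \sum_{T \subseteq S} \Bigl(\prod_{v \in T} u_v^{v(f)}\Bigr) \int_{\mathbb{A}_T} H_K(x)^{-s}\,d\mu(x) = \sum_{T \subseteq S}\int_{\mathbb{A}_T}\widehat{H}_{\phi}(x)^{-s}\,d\mu(x) = \int_{\mathbb{A}_K}\widehat{H}_{\phi}(x)^{-s}\,d\mu(x), \]
which by Lemma \ref{computation_of_integration} is precisely the first summand of the claimed formula.

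For the residue part, I would pull out the common factor $c(g)\big/\bigl(\zeta_K(s)\prod_{v \in S}(1 - q_v^{-s})\bigr)$ and evaluate the remaining sum $\sum_{T \subseteq S}\prod_{v \in T} u_v^{v(f)} \prod_{v \in S \setminus T}(-q_v^{-s})$ as the expanded product $\prod_{v \in S}\bigl(u_v^{v(f)} - q_v^{-s}\bigr)$; then, using $q_v^{-s} = u_v^d$ and dividing by $\prod_{v \in S}(1 - q_v^{-s}) = \prod_{v \in S}(1 - u_v^d)$, this becomes exactly $\frac{c(g)}{\zeta_K(s)}\prod_{v \in S}\frac{u_v^{v(f)} - u_v^d}{1 - u_v^d}$, the second summand. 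Adding the two parts gives the theorem, and meromorphic continuation to $\mathbb{C}$ is then visible directly from the closed form (consistently with Theorem \ref{Hsia_main}). The step requiring genuine care, rather than routine algebra, is the simultaneous bookkeeping of the $K$-rational and the adelic decompositions — verifying that $\widehat{H}_{\phi}/H_K \equiv \rho_T$ on $\mathbb{D}_T$ and $\widehat{H}_{\phi} \equiv \rho_T H_K$ on $\mathbb{A}_T$ with the \emph{same} constant — together with checking that the two pieces of $K_v$ for this particular $\phi$ have the form demanded by Condition \ref{Hsia_condition}(a); both are supplied by the explicit valuation computation of Example \ref{computation_local_height}.
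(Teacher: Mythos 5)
Your proposal is correct and follows essentially the same route as the paper's own proof: decompose $K$ into the pieces $\mathbb{D}_T$ indexed by $T \subseteq S$, apply Lemma \ref{zeta_on_affinoids} to each $W(\mathbb{D}_T, s)$, reassemble the adelic-integral parts into $\int_{\mathbb{A}_K}\widehat{H}_\phi(x)^{-s}\,d\mu(x)$ (handled by Lemma \ref{computation_of_integration}), and collapse the residue sum via $\sum_{T}\prod_{v\in T}u_v^{v(f)}\prod_{v\in S\setminus T}(-u_v^d)=\prod_{v\in S}(u_v^{v(f)}-u_v^d)$. The only differences from the paper are expository — you spell out explicitly that $\widehat{H}_{\phi}\equiv\rho_T H_K$ place-by-place on $\mathbb{A}_T$ and that Condition \ref{Hsia_condition}(a) is met, steps the paper leaves implicit.
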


\begin{proof}
  Let $S = \{v \in M_K \colon v(f) > 0 \} = \{v \in M_K \colon \phi \text{ has bad reduction at } v \}$. 
  By the computation in Example \ref{computation_local_height}, 
  \[ x \in \mathbb{D}_T \Rightarrow \frac{\widehat{H}_{\phi}(x)}{H_K(x)} = \rho_T := \prod_{v \in T} q_v^{v(f)/d} \]

  Since $K = \coprod_{T \subset S} \mathbb{D}_T$ and the above fact,
  \[Z_K(\phi, s)=\sum_{x \in K} \widehat{H}_{\phi}(x)^{-s}=\sum_{T \subset S} \sum_{x \in \mathbb{D}_T} \widehat{H}_{\phi}(x)^{-s}=\sum_{T \subset S} \rho_T^{-s} W(\mathbb{D}_T,s).\]
  By aplying Lemma \ref{zeta_on_affinoids},
  \begin{align*}
    &Z_K(\phi, s)\\ 
    =& \sum_{T \subset S} \rho_T^{-s} W(\mathbb{D}_T,s)\\
    =& \sum_{T \subset S} \rho_T^{-s} \left( \int_{\mathbb{A}_T} H_K(x)^{-s} d\mu(x) + \frac{c(g) \prod_{v \in S \setminus T} (-q_v^{-s})}{\zeta_K(s) \prod_{v \in S} (1 - q_v^{-s})} \right)\\
    =& \sum_{T \subset S} \int_{\mathbb{A}_T} (\rho_T H_K(x))^{-s} d\mu(x) + \frac{c(g)}{\zeta_K(s) \prod_{v \in S} (1 - q_v^{-s})}\sum_{T \subset S} \rho_T^{-s} \prod_{v \in S \setminus T} (-q_v^{-s})\\
    =& \int_{\mathbb{A}_K} \widehat{H}_\phi(x)^{-s} d \mu(x) + \frac{c(g)}{\zeta_K(s) \prod_{v \in S} (1 - q_v^{-s})}\sum_{T \subset S} \rho_T^{-s} \prod_{v \in S \setminus T} (-q_v^{-s})
  \end{align*}
  On the other hand, we have
  \begin{align*}
  \sum_{T \subset S} \rho_T^{-s} \prod_{v \in S \setminus T} (-q_v^{-s})
  &= \sum_{T \subset S} \left(\prod_{v \in T} q_v^{-sv(f)/d}\right)\left(\prod_{v \in S \setminus T} (-q_v^{-s})\right) \\
  &= \sum_{T \subset S} \left(\prod_{v \in T} u_v^{v(f)} \right)\left( \prod_{v \in S \setminus T} (-u_v^d)\right)\\
  &= \prod_{v \in S} (u_v^{v(f)} - u_v^{d}).
  \end{align*}
  By conbining the above discussion and Lemma \ref{computation_of_integration}, we obtain the desired result
\end{proof}

\section{A general discussion on the counting problem}

The purpose of this section is to provide a general discussion describing the asymptotic behavior of the number of points with bounded height, using the height zeta function in a general scenario.
In order to obtain asymptotic formulas from zeta functions, we often employ Tauberian theorems or other methods such as Perron's formula. 
However, these methods do not work well in our particular situation, as our zeta function $Z_K(\phi, s)$ may have infinitely many poles on a vertical line $\mathrm{Re}(s) = \sigma$. 
Therefore, we adopt an alternative approach as outlined in this section.

\subsection{An asymptotic formula}

In this subsection, we present a general statement in analytic number theory that will be utilized in the subsequent subsection.

First, we introduce a lemma that involves Stirling numbers of the second kind. 
We denote the Stirling numbers of the second kind as $\left \{ {n \atop k} \right \}$ and the $n$-th Bernoulli number as $B_n$. 
Stirling numbers of the second kind can be defined by the recurrence relation
\begin{align}\label{StirlingNumber}
   \left \{ {n + 1 \atop k} \right \} = k \left \{ {n \atop k} \right \} + \left \{ {n \atop k - 1} \right \},
\end{align}
along with the initial conditions
\begin{align}\label{InitialCondition}
   \left \{ {n \atop n} \right \} = 1 \text{ for all } n \ge 0, \quad \left \{ {n \atop 0} \right \} = \left \{ {0 \atop n} \right \} = 0 \text{ for all } n > 0.
\end{align}
The $n$-th Bernoulli number $B_n$ is defined by utilizing the $n$-th Taylor coefficient of $\frac{x}{e^x - 1}$ as follows:
\begin{align}\label{BernoulliNumber}
   \frac{x}{e^x - 1} = \sum_{n = 0}^{\infty} B_n \frac{x^n}{n!}.
\end{align}

\begin{lem}\label{TaylorExpansion}
  For all positive integers $n > 0$, the following equation holds.
  \[\sum_{k=1}^{n} \frac{(-1)^{n+k} \frac{(k-1)!}{(n-1)!}\left \{ {n \atop k} \right \}}{(1 - e^{-x})^k} = \frac{1}{x^n} - \frac{1}{(n-1)!}\sum_{m=0}^{\infty} \frac{B_{m+n}}{m+n} \frac{(-x)^m}{m!}  \]
\end{lem}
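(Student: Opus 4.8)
The plan is to recognize the left-hand side as (essentially) the partial-fraction expansion of $x^{-n}$ with respect to the variable $y = 1 - e^{-x}$, and then to read off the correction term as the holomorphic part of $x^{-n}$ at $x = 0$. Concretely, set $y = 1 - e^{-x}$, so that $e^{-x} = 1 - y$ and $x = -\log(1-y)$. The key algebraic identity I would isolate first is the classical expansion

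\[
\frac{1}{\left(-\log(1-y)\right)^n} = \sum_{k=1}^{n} \frac{(-1)^{n+k}\,(k-1)!}{(n-1)!}\left\{{n \atop k}\right\}\frac{1}{y^k} + (\text{holomorphic in } y \text{ near } 0),
\]

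which encodes how the principal part of $x^{-n}$ in powers of $x$ translates into a principal part in powers of $y$ via the substitution $x = -\log(1-y) = y + y^2/2 + \cdots$. This is where the Stirling numbers of the second kind enter: the coefficients come from expanding $\left(\frac{y}{-\log(1-y)}\right)^{-n} \cdot$ (powers), and the standard generating-function identity $\left(\frac{-\log(1-y)}{y}\right)$-type manipulations produce exactly $\left\{{n \atop k}\right\}$ with the stated signs and factorials. I would prove this by induction on $n$ using the Stirling recurrence \eqref{StirlingNumber}: differentiating $(-\log(1-y))^{-n}$ with respect to $y$ relates the $n$ case to the $n+1$ case, and matching principal parts gives the recurrence with the correct combinatorial weights.

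Once that identity is in hand, substituting $y = 1 - e^{-x}$ turns its left side into exactly the sum on the left of the lemma, and its right side into $x^{-n}$ minus the holomorphic part $P_n(x)$ of $x^{-n}$ at $x=0$. So the remaining task is purely to identify $P_n(x)$. For this I would use the defining relation \eqref{BernoulliNumber}: since $\frac{x}{e^x-1} = \sum_{m\ge 0} B_m \frac{x^m}{m!}$, integrating or differentiating term by term gives a closed form for the Taylor coefficients of $\frac{1}{x^n} - (\text{principal part})$. Precisely, I would start from $\frac{d}{dx}\!\left(\frac{-1}{(n-1)!\,x^{n-1}} \cdot \tfrac{x}{e^x-1}\right)$-type expressions, or more cleanly observe that the holomorphic part of $x^{-n}$ at $0$ can be written as $\frac{1}{(n-1)!}\left(\frac{d}{dx}\right)^{n-1}\!\left[\frac{1}{x} - \frac{1}{x}\cdot\frac{x}{e^x-1}\cdot(\cdots)\right]$ and track the Bernoulli coefficients; the factor $\frac{B_{m+n}}{m+n}$ arises because differentiating $\frac{x^{m+n}}{(m+n)!}$ a total of $n-1$ times, and incorporating the $\frac{1}{(n-1)!}$, leaves $\frac{1}{(n-1)!}\cdot\frac{(m+n)!}{(m+1)!}\cdot\frac{1}{(m+n)!} = \frac{1}{(n-1)!(m+1)!}$ — I would verify the index bookkeeping carefully so that the summand comes out as $\frac{B_{m+n}}{m+n}\frac{(-x)^m}{m!}$, the sign $(-x)^m$ coming from $e^{-x}$ versus $e^x$ in the generating function.

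The main obstacle I anticipate is the first identity: correctly proving that the principal part of $(-\log(1-y))^{-n}$ in $y$ has coefficients $\frac{(-1)^{n+k}(k-1)!}{(n-1)!}\left\{{n\atop k}\right\}$. The induction is plausible but the bookkeeping of signs and factorials under $\frac{d}{dy}$ is delicate, and one must be careful that differentiation does not disturb the holomorphic remainder in an uncontrolled way. An alternative, possibly cleaner, route is to avoid the intermediate variable $y$ altogether: expand $\frac{1}{(1-e^{-x})^k}$ directly as a Laurent series in $x$ (its principal part is determined by $1-e^{-x} = x(1 - x/2 + \cdots)$, again governed by Bernoulli numbers), form the finite linear combination on the left, and show that all negative powers of $x$ cancel except $x^{-n}$, while the constant and positive-degree terms assemble into the Bernoulli series on the right. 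In that approach the Stirling numbers appear as the known coefficients in $\frac{1}{(e^x-1)^k} = \sum_{n\ge k}\left\{{n\atop k}\right\}\frac{(-1)^{n-k}(\cdots)}{n!}x^{n-k}\cdots$ (the standard exponential generating function $\frac{(e^x-1)^k}{k!} = \sum_{n}\left\{{n\atop k}\right\}\frac{x^n}{n!}$ inverted), and the lemma reduces to the orthogonality-type relation $\sum_k (-1)^{n+k}(k-1)!\left\{{n\atop k}\right\}[\text{coeff}] = [\text{desired}]$, which is a finite identity checkable from the Stirling recurrence. I would present whichever of these two is shorter once the details are worked out, but I expect the $y$-substitution version to give the most transparent explanation of why the answer has this shape.
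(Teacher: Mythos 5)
Your plan has the right raw ingredients (the Stirling recurrence for the principal part, the Bernoulli generating function for the constant part), but there is a genuine gap, and it sits exactly where the paper's proof is most efficient. Your first route splits the lemma into two separate sub-problems: (i) extract the principal part of $x^{-n}$ in the variable $y=1-e^{-x}$, obtaining the coefficients $\frac{(-1)^{n+k}(k-1)!}{(n-1)!}\left\{{n\atop k}\right\}$, and (ii) identify the remaining holomorphic-in-$x$ function with the Bernoulli tail $\frac{1}{(n-1)!}\sum_{m\ge 0}\frac{B_{m+n}}{m+n}\frac{(-x)^m}{m!}$. Step (i) can indeed be closed by induction on $n$ with the Stirling recurrence, although the differentiation in $y$ is messier than you suggest because $\frac{d}{dy}x^{-n}=-n(1-y)^{-1}x^{-(n+1)}$ introduces a spurious $(1-y)^{-1}$ that you must absorb via $(1-y)\frac{d}{dy}y^{-k}=-ky^{-k-1}+ky^{-k}$ before the recurrence $\left\{{n+1\atop k}\right\}=k\left\{{n\atop k}\right\}+\left\{{n\atop k-1}\right\}$ becomes visible; you flag this only as ``delicate.'' Step (ii), however, is not actually argued. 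The sketch you give (differentiate $\frac{x^{m+n}}{(m+n)!}$ a total of $n-1$ times and ``incorporate'' a $\frac{1}{(n-1)!}$) produces a coefficient $\frac{1}{(n-1)!(m+1)!}$, which does not match the target $\frac{1}{(n-1)!}\cdot\frac{B_{m+n}}{(m+n)\,m!}$, and you concede you have not verified the index bookkeeping. As written, this step would fail.

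The paper sidesteps the whole issue by never separating the principal part from the holomorphic part. After the change $x\mapsto -x$ it writes both sides as functions of $x$,
\[
F_n(x)=\sum_{k=1}^{n}\frac{(k-1)!\left\{{n\atop k}\right\}}{(e^x-1)^k},\qquad
G_n(x)=\frac{(n-1)!}{x^n}-(-1)^n\sum_{m\ge 0}\frac{B_{n+m}}{n+m}\frac{x^m}{m!},
\]
checks $F_1=G_1$ (which \emph{is} the defining relation for $B_n$, so there is nothing left to identify at the base case), and then uses the clean derivative recurrences $F_n'=-F_{n+1}$ and $G_n'=-G_{n+1}$; the Stirling recurrence appears in verifying the first, via $\frac{d}{dx}\frac{1}{(e^x-1)^k}=-k\left(\frac{1}{(e^x-1)^k}+\frac{1}{(e^x-1)^{k+1}}\right)$. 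Crucially the variable $e^x-1$ (rather than your $1-e^{-x}$) makes the derivative relation two-term with no stray exponential factor, which is precisely what makes the single induction go through without the bookkeeping that stalls your approach. If you want to salvage your own route, you would need to prove step (ii) as its own induction on $n$ starting from the $n=1$ Bernoulli identity, at which point you have done strictly more work than the paper to arrive at the same place.
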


\begin{proof}
  We can easily see that the statement of the lemma is equivalent to the following equation.
  \[ \sum_{k=1}^{n} \frac{(k - 1)! \left \{ {n \atop k} \right \}}{(e^x - 1)^k} = \frac{(n-1)!}{x^n} - (-1)^n \sum_{m=0}^{\infty} \frac{B_{n+m}}{n+m} \frac{x^m}{m!}.\]
  We will prove this formula by mathematical induction on $n$. We put
  \begin{align*}
    F_n(x) &= \sum_{k=1}^{n} \frac{(k - 1)! \left \{ {n \atop k} \right \}}{(e^x - 1)^k},\\
    G_n(x) &= \frac{(n-1)!}{x^n} - (-1)^n \sum_{m=0}^{\infty} \frac{B_{n+m}}{n+m} \frac{x^m}{m!}.
  \end{align*} 
  The condition $F_1(x) = G_1(x)$ is equivalent to the definition of Bernoulli numbers (\ref{BernoulliNumber}). 
  Clearly $G_n$ satisfies the recrrence relation $\frac{d}{dx} G_n(x) = -G_{n+1}(x)$. 
  By using the recrrence relation of stirling numbers (\ref{StirlingNumber}), we can verify $\frac{d}{dx} F_n(x) = -F_{n+1}(x)$.
  So we can check the relation $F_n(x) = G_n(x)$ inductively.   
\end{proof}

\begin{thm}\label{GeneralAsymptotic}
  Let $\alpha > 0$ be a positive real number and $\{a_n\}_n$ be a sequence of complex numbers.
  Assume that the function $f$ defined by 
  \[ f(s) = \sum_{m=0}^{\infty} a_m \alpha^{-ms}\]
  converges for $\mathrm{Re}(s) > \sigma_0$ for some $\sigma_0 \in \mathbb{R}_{>0}$ and continued meromorphically to the half plane $\{z \in \mathbb{C} \mid \mathrm{Re}(s) > - \sigma_1\}$ for some $\sigma_1 \in \mathbb{R}_{>0}$.
  Then the following asymptotic formula holds.
  \[ \sum_{m \le T}a_m = \sum_{m \le T} \sum_{a \in \mathrm{Poles}} \alpha^{am} \sum_{n = 1}^{N_a} c_{n}(a) \frac{m^{n-1}}{(n - 1)!} + O(1)\]
  as $T \to \infty$. Where
  \[ \mathrm{Poles} = \left \{a \in \mathbb{C} \colon a \text{ is a pole of } f, 0 \le \mathrm{Re}(a), 0 \le \mathrm{Im}(a) < \frac{2\pi}{\log \alpha} \right \}\]
  is a set of poles of $f$ and
  \[ f(s) = \sum_{n=1}^{N_a} \frac{c_n(a)}{(\log \alpha)^n(s - a)^n} + (\text{holomorphic at }s=a)\] 
  is the Laurent expansion of $f$ at $s = a$.
\end{thm}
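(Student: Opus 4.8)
The plan is to realize the partial sum $\sum_{m \le T} a_m$ as a contour integral of $f(s)$ against a suitable kernel, then shift the contour past the line $\mathrm{Re}(s) = 0$ into the region of holomorphy, picking up the residues at the poles in $\mathrm{Poles}$. The natural device here, since $f$ is a Dirichlet series in the variable $\alpha^{-s}$ and hence $\frac{2\pi i}{\log\alpha}$-periodic, is to integrate over a vertical segment of height $\frac{2\pi}{\log\alpha}$ rather than an infinite vertical line: periodicity collapses the ``infinitely many poles on a vertical line'' problem into finitely many poles inside a single period strip, which is exactly why $\mathrm{Poles}$ is defined with the constraint $0 \le \mathrm{Im}(a) < \frac{2\pi}{\log\alpha}$.

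Concretely, I would start from the discrete orthogonality/Fourier identity: for a real $c > \sigma_0$,
\[
\frac{\log\alpha}{2\pi i}\int_{c - \pi i/\log\alpha}^{\,c + \pi i/\log\alpha} f(s)\,\alpha^{Ts}\,\frac{ds}{1 - \alpha^{-s}}
\]
extracts $\sum_{m \le T} a_m$ up to an explicitly controlled error; more robustly one works with a smoothed or averaged version (e.g.\ summing the above over integer shifts of $T$, or inserting a kernel like $\frac{\alpha^{(T+1)s} - \alpha^{Ts}}{1 - \alpha^{-s}}$) so that the integrand decays and the manipulations are justified. Then I shift the segment from $\mathrm{Re}(s) = c$ to $\mathrm{Re}(s) = -\sigma_1/2$, within the half-plane of meromorphy. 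The residue of $f(s)\,\alpha^{Ts}/(1-\alpha^{-s})$ at a pole $a \in \mathrm{Poles}$ of order $N_a$ is computed from the Laurent data: writing $s = a + w$, one has $\alpha^{Ts} = \alpha^{aT}e^{Tw\log\alpha}$ and $f(a+w) = \sum_{n=1}^{N_a} \frac{c_n(a)}{(\log\alpha)^n w^n} + \cdots$, and the factor $1/(1-\alpha^{-s}) = 1/(1 - \alpha^{-a}e^{-w\log\alpha})$ must be Laurent-expanded around $w=0$. Here is precisely where Lemma \ref{TaylorExpansion} enters: that lemma gives the principal part of $1/(1-e^{-x})^k$ in terms of $1/x^n$ plus a holomorphic Bernoulli tail, so multiplying the three expansions and extracting the $w^{-1}$ coefficient produces, after the combinatorial identity repackages the Stirling-number sums, exactly the clean polynomial $\sum_{n=1}^{N_a} c_n(a)\frac{T^{n-1}}{(n-1)!}$ (times $\alpha^{aT}$). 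The contributions from the two horizontal connecting segments cancel by the $\frac{2\pi i}{\log\alpha}$-periodicity of $f(s)/(1-\alpha^{-s})$ combined with the periodicity of $\alpha^{Ts}$ in $T \in \mathbb{Z}$; the shifted segment on $\mathrm{Re}(s) = -\sigma_1/2$ contributes $O(\alpha^{-\sigma_1 T/2}) = O(1)$ (indeed $o(1)$), and with the smoothing the remaining discretization error is $O(1)$.

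I expect the main obstacle to be the analytic bookkeeping of the contour shift: one must ensure $f$ has no poles on the boundary segments of the period strip $\mathrm{Re}(s) = c$ and $\mathrm{Re}(s) = -\sigma_1/2$ (handled by perturbing $c$ and the height of the box slightly, using that poles are isolated), and one must control the growth of $f(s)$ on horizontal segments well enough to justify that the truncated integral converges and that the horizontal pieces genuinely cancel rather than merely being small — the periodicity argument makes this exact, which is the key simplification over a Perron-type estimate. A secondary technical point is the passage from the smoothed sum back to $\sum_{m\le T} a_m$ with only an $O(1)$ loss; this is where the hypothesis $f \in \mathbb{Q}(\alpha^{-s})$ in the intended application (Theorem \ref{main2}) is comfortably enough, since rationality forces $a_m$ to grow only polynomially, but for the stated generality one simply carries the kernel through so that the identity is exact up to boundary terms. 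Once the residue computation is organized via Lemma \ref{TaylorExpansion}, the rest is routine.
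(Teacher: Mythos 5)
Your overall strategy --- a Perron-type integral collapsed to a single period strip and then shifted leftward to pick up residues --- is a genuinely different route from the paper's. The paper never invokes contour integration: it builds a periodic ``principal parts'' function
\[ P(s) = \sum_{a\in\mathrm{Poles}}\sum_{n=1}^{N_a}c_n(a)\sum_{k=1}^{n}\frac{(-1)^{n+k}\frac{(k-1)!}{(n-1)!}\left\{{n\atop k}\right\}}{(1-\alpha^{-(s-a)})^k} \]
using Lemma \ref{TaylorExpansion}, observes that $f-P$ is holomorphic on a strip $\mathrm{Re}(s)>-\sigma_2$, expands $P$ as a power series in $\alpha^{-s}$ whose $m$-th coefficient simplifies to $\sum_a\alpha^{am}\sum_nc_n(a)\frac{m^{n-1}}{(n-1)!}$ via the Stirling identity, and concludes from the geometric decay of the coefficients of the holomorphic remainder $f-P$. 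Your contour-shift plan is a legitimate alternative in principle, but as written it has two concrete problems in the residue step.

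First, you place Lemma \ref{TaylorExpansion} in the wrong spot: at a pole $a\ne 0$ the kernel factor $1/(1-\alpha^{-s})=1/(1-\alpha^{-a}e^{-w\log\alpha})$ is \emph{holomorphic} at $w=0$, not singular, so the lemma (which concerns the principal part of $1/(1-e^{-x})^k$) does not apply there; its Bernoulli-tail combinatorics would only surface at the kernel's own pole $s=0$. Second, and more seriously, the residue is not $\alpha^{aT}\sum_{n=1}^{N_a}c_n(a)\frac{T^{n-1}}{(n-1)!}$. Already for a simple pole $a\ne 0$ the normalized residue of $f(s)\alpha^{Ts}/(1-\alpha^{-s})$ equals $\frac{c_1(a)\,\alpha^{aT}}{1-\alpha^{-a}}$, with the kernel supplying the factor $(1-\alpha^{-a})^{-1}$, and for higher-order poles further lower-degree terms in $T$ come from the Taylor expansion of the kernel; for $\mathrm{Re}(a)>0$ the discrepancy with your claimed form grows like $\alpha^{\mathrm{Re}(a)\,T}$ and is not absorbed into $O(1)$. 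Moreover, even the corrected residue is a single closed expression $(\text{polynomial in }T)\cdot\alpha^{aT}$, whereas the theorem asserts the finite sum $\sum_{m\le T}\alpha^{am}\sum_nc_n(a)\frac{m^{n-1}}{(n-1)!}$; these agree only up to $O(1)$ via explicit geometric-series identities such as $\sum_{m\le T}\alpha^{am}=\frac{\alpha^{aT}}{1-\alpha^{-a}}-\frac{\alpha^{-a}}{1-\alpha^{-a}}$ and their differentiated versions, and that reconciliation must actually be carried out. So your outline needs both a corrected residue computation and an extra algebraic step matching it against the stated formula before it constitutes a proof.
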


\begin{proof}
  First, $f(s)$ is a periodic funtion with period $2\pi\sqrt{-1}/\log\alpha$ by its definition. So notice that $\mathrm{Poles}$ gives a system of representatives of the set of poles  of $f$ with non-negative real part modulo $\frac{2\pi\sqrt{-1}}{\log \alpha}\mathbb{Z}$. 
  By Lemma \ref{TaylorExpansion},
  \[ \sum_{k=1}^{n} \frac{(-1)^{n+k}\frac{(k-1)!}{(n-1)!}\left \{ {n \atop k} \right \}}{(1 - \alpha^{-(s-a)})^k} - \frac{1}{(\log \alpha)^n (s - a)^n} \]
  is holomorphic at $s = a$. Therefore, for $a \in \mathrm{Poles}$, 
  \[ f(s) - \sum_{n=1}^{N_a} c_n(a) \sum_{k=1}^{n} \frac{(-1)^{n+k}\frac{(k-1)!}{(n-1)!}\left \{ {n \atop k} \right \}}{(1 - \alpha^{-(s-a)})^k}\]
  is holomorphic at $s = a$.  Since $\alpha^{-(s-a)}$ is the periodic function with period $2\pi\sqrt{-1}/\log\alpha$, we can see that
  \[ f(s) - \sum_{a \in \mathrm{Poles}} \sum_{n=1}^{N_a} c_n(a) \sum_{k=1}^{n} \frac{(-1)^{n+k}\frac{(k-1)!}{(n-1)!}\left \{ {n \atop k} \right \}}{(1 - \alpha^{-(s-a)})^k}\]
  is a holomorphic function on $\mathrm{Re}(s) > -\sigma_2$ for some positive real number $0 < \sigma_2 < \sigma_1$.
  On the other hand, we can expand the sum along $\mathrm{Poles}$ as a power series of $\alpha^{-s}$ as follows.
  \begin{align*}
    &\sum_{a \in \mathrm{Poles}} \sum_{n=1}^{N_a} c_n(a) \sum_{k=1}^{n} \frac{(-1)^{n+k}\frac{(k-1)!}{(n-1)!}\left \{ {n \atop k} \right \}}{(1 - \alpha^{-(s-a)})^k} \\
    =& \sum_{a \in \mathrm{Poles}} \sum_{n=1}^{N_a} c_n(a) \sum_{k=1}^{n} (-1)^{n+k}\frac{(k-1)!}{(n-1)!}\left \{ {n \atop k} \right \} \sum_{m=0}^{\infty} \binom{-k}{m} (-\alpha^{-(s-a)})^m \\
    =& \sum_{a \in \mathrm{Poles}} \sum_{n=1}^{N_a} c_n(a) \sum_{k=1}^{n} (-1)^{n+k}\frac{(k-1)!}{(n-1)!}\left \{ {n \atop k} \right \} \sum_{m=0}^{\infty} \binom{m + k - 1}{m} \alpha^{ma} \alpha^{-ms} \\
    =& \sum_{m=0}^{\infty} \alpha^{-ms} \sum_{a \in \mathrm{Poles}} \alpha^{am} \sum_{n=1}^{N_a} \frac{c_n(a)}{(n-1)!} \sum_{k=1}^{n} (-1)^{n-k} \left\{ {n \atop k} \right\} (k-1)! \binom{m + k - 1}{k - 1}
  \end{align*}
  The first equality is the binomial theorem. 
  The second identity is given by the formula of binomial coefficient, that is, $\binom{-k}{m} = (-1)^m \binom{m+k-1}{m}$.
  The part concerning the summation with respect to $k$ can be expressed as follows using the Pochhammer symbol $(x)_n = x (x+1) \cdots (x+n-1)$.
  \begin{align*}
    &\sum_{k=1}^{n} (-1)^{n-k} \left\{ {n \atop k} \right\} (k-1)! \binom{m + k - 1}{k - 1}\\
    =& \frac{1}{m}\sum_{k=1}^{n} (-1)^{n-k} \left\{ {n \atop k} \right\} (m)_k\\
    =& m^{n-1}
  \end{align*}
  The first equality comes from the fact $x^n = \sum_{k=1}^{n} (-1)^{n-k}\left \{ {n \atop k} \right \} (x)_k$.
  Therefore,
  \begin{align*}
    &\sum_{a \in \mathrm{Poles}} \sum_{n=1}^{N_a} c_n(a) \sum_{k=1}^{n} \frac{(-1)^{n+k}\frac{(k-1)!}{(n-1)!}\left \{ {n \atop k} \right \}}{(1 - \alpha^{-(s-a)})^k}\\
    =& \sum_{m=0}^{\infty} \alpha^{-ms} \sum_{a \in \mathrm{Poles}} \alpha^{am} \sum_{n=1}^{N_a} c_n(a) \frac{m^{n-1}}{(n-1)!}.
  \end{align*}

  The power series of $\alpha^{-s}$
  \[\sum_{m=0}^{\infty} \left( a_m - \sum_{a \in \mathrm{Poles}} \alpha^{ma} \sum_{n=1}^{N_a} c_n(a) \frac{m^{n-1}}{(n-1)!} \right) \alpha^{-ms}\]
  is holomorphic for $\mathrm{Re}(s) > - \sigma_2$. So the series given by substisuting $s = -\sigma_2/2$ into the above function converges absolutely. 
  Especially, we obtain a constant $C > 0$ such that for all $m \in \mathbb{Z}_{\ge 0}$, 
  \[ \left | a_m - \sum_{a \in \mathrm{Poles}} \alpha^{ma} \sum_{n=1}^{N_a} c_n(a) \frac{m^{n-1}}{(n-1)!} \right | \alpha^{m\sigma_2/2} \le C .\]
  Therefore,
  \begin{align*}
    \left | \sum_{m \le T}a_m - \sum_{m \le T} \sum_{a \in \mathrm{Poles}} \alpha^{am} \sum_{n = 1}^{N_a} c_{n}(a) \frac{m^{n-1}}{(n - 1)!} \right | \le C \sum_{n \le T} (\alpha^{-\sigma_2/2})^m < \frac{C}{1-\alpha^{-\sigma_2/2}}.
  \end{align*}
\end{proof}

\subsection{Asymptotic behabior of the number of points with bounded height}\label{AsymptoticHeightGeneral}

In this subsection, we assume the following assumptions.

\begin{itemize}
  \item $S$ is a set.
  \item $H \colon S \to \mathbb{R}_{>0}$ is a map from $S$ to the set of positive real numbers $\mathbb{R}_{>0} = \{x \in \mathbb{R} \mid x > 0\}$.
  \item For a positive real number $B >0 $, $N(S, H, B) = \# \{ x \in S \colon H(x) \le B\}$.
  \item Assume that $N(S, H, B) < \infty$ for each positive real number $B > 0$.
  \item The zeta function $Z(S, H, s)$ is defined by the folloing Dirichlet series:
    \[ Z(S, H, s) = \sum_{x \in S} \frac{1}{H(x)^s}\]
  \item Assume that the above series converges absolutely for $\mathrm{Re}(s) > \sigma_0$ and $Z(S, H, s) \in \mathbb{Q}(\alpha^{-s})$ for some $\sigma_0, \alpha \in \mathbb{R}_{>0}$.
\end{itemize}

\begin{thm}\label{zeta_to_asymptotic}
  Let $\mathrm{Poles}$ be a set of poles of $Z(S,H,s)$ defined as follows:
  \[ \mathrm{Poles} = \left \{a \in \mathbb{C} \colon a \text{ is a pole of } f, 0 \le \mathrm{Re}(a), 0 \le \mathrm{Im}(a) < \frac{2\pi}{\log \alpha} \right \}.\]
  For a pole $a \in \mathrm{Poles}$, we write the Laurent expansion at $s = a$ as
  \[ Z(S,H,s) = \sum_{n=1}^{N_a} \frac{c_n(a)}{(\log \alpha)^n(s - a)^n} + (\text{holomorphic at }s=a).\]
  Then, the following asymptotic formula for $N(S,H,B)$ holds.
  \begin{align*}
    N(S, H, B) =  \sum_{\alpha^m \le B} \sum_{a \in \mathrm{Poles}} \alpha^{am} \sum_{n = 1}^{N_a} c_{n}(a) \frac{m^{n-1}}{(n - 1)!} + O(1).
  \end{align*}
\end{thm}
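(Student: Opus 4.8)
The plan is to reduce Theorem \ref{zeta_to_asymptotic} to Theorem \ref{GeneralAsymptotic} by setting up the right bookkeeping. First I would define, for each nonnegative integer $m$, the coefficient $a_m = \#\{x \in S \colon H(x) = \alpha^m\}$, that is, the number of points whose height is exactly $\alpha^m$; I also need to handle the points whose height is \emph{not} an integral power of $\alpha$. The key structural observation is that $Z(S,H,s) \in \mathbb{Q}(\alpha^{-s})$: writing $t = \alpha^{-s}$, the zeta function is a rational function of $t$, and since it is given for $\mathrm{Re}(s)$ large by the Dirichlet series $\sum_{x} H(x)^{-s}$, the exponents appearing must be such that $H(x)^{-s}$ is (up to finitely many exceptional points that can be absorbed into a rational function with no pole affecting the count) a monomial in $t$. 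More precisely, I would argue that the set $\{H(x) \colon x \in S\}$ is, outside a finite set, contained in $\alpha^{\mathbb{Z}_{\ge 0}}$, so that $Z(S,H,s) = P(t) + \sum_{m \ge 0} a_m t^m$ where $P$ is a Laurent polynomial accounting for the finitely many points with height not a power of $\alpha$ (and for negative powers, which contribute nothing for $\mathrm{Re}(s)$ large but only a bounded amount to the count). This is the step I expect to be the main obstacle: pinning down exactly why membership in $\mathbb{Q}(\alpha^{-s})$ forces the support of $H$ to be essentially a subset of the powers of $\alpha$, and checking that the finitely many stray points contribute only $O(1)$ to $N(S,H,B)$ and do not interfere with the poles.

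Once that reduction is in place, the rest is a direct application of Theorem \ref{GeneralAsymptotic}. I would verify its hypotheses: the function $f(s) = \sum_{m \ge 0} a_m \alpha^{-ms}$ converges for $\mathrm{Re}(s) > \sigma_0$ (inherited from absolute convergence of $Z(S,H,s)$, since $a_m \le N(S,H,\alpha^m)$ and dropping the Laurent polynomial $P$ changes nothing about convergence on a right half-plane), and it continues meromorphically to all of $\mathbb{C}$ — in particular to some half-plane $\mathrm{Re}(s) > -\sigma_1$ — because $f(s) = Z(S,H,s) - P(\alpha^{-s})$ is a rational function of $\alpha^{-s}$, hence meromorphic everywhere, and subtracting the Laurent polynomial $P$ does not introduce new poles. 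The poles of $f$ coincide with the poles of $Z(S,H,s)$ (the Laurent polynomial $P(\alpha^{-s})$ is entire), so the set $\mathrm{Poles}$ and the Laurent coefficients $c_n(a)$ are the same for $f$ as for $Z(S,H,s)$.

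Finally, I would connect the partial sums of $a_m$ to $N(S,H,B)$. By definition $N(S,H,B) = \#\{x \in S \colon H(x) \le B\}$, and splitting off the finitely many points with height not a power of $\alpha$ gives $N(S,H,B) = \sum_{\alpha^m \le B} a_m + O(1)$, where the $O(1)$ is bounded by the (fixed, finite) number of exceptional points. Applying Theorem \ref{GeneralAsymptotic} with $T$ chosen so that $\alpha^T \le B < \alpha^{T+1}$ — equivalently, summing over exactly those $m$ with $\alpha^m \le B$ — yields
\[
  \sum_{\alpha^m \le B} a_m = \sum_{\alpha^m \le B} \sum_{a \in \mathrm{Poles}} \alpha^{am} \sum_{n=1}^{N_a} c_n(a) \frac{m^{n-1}}{(n-1)!} + O(1),
\]
and combining this with the previous display gives the claimed formula for $N(S,H,B)$. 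The only points requiring care here are that the condition ``$m \le T$'' in Theorem \ref{GeneralAsymptotic} translates cleanly to ``$\alpha^m \le B$'' (immediate since $\alpha > 1$; if $\alpha \le 1$ one must check the statement is vacuous or handle it separately, as then the Dirichlet series could not converge on a right half-plane unless $S$ is finite), and that the two $O(1)$ error terms simply add.
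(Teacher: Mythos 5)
Your proposal is correct and follows essentially the same route as the paper: reduce to Theorem \ref{GeneralAsymptotic} by writing $Z(S,H,s) = \sum_{m\ge 0} a_m \alpha^{-ms}$ and identifying $N(S,H,B)$ with $\sum_{\alpha^m \le B} a_m$. The one place you diverge, the ``main obstacle'' of exceptional points whose height is not a power of $\alpha$, is not actually an obstacle: since a term $H(x)^{-s}$ is periodic in $s$ with period $2\pi\sqrt{-1}/\log H(x)$ while a rational function of $\alpha^{-s}$ has period $2\pi\sqrt{-1}/\log\alpha$, uniqueness of Dirichlet series coefficients (the step the paper invokes) forces \emph{every} value of $H$ to lie in $\alpha^{\mathbb{Z}_{\ge 0}}$, so your auxiliary Laurent polynomial $P$ is identically zero and $N(S,H,B) = \sum_{\alpha^m\le B} a_m$ holds exactly, with no extra $O(1)$ correction from stray points.
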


\begin{proof}
  Since $Z(S,H,s) \in \mathbb{Q}$, we have
  \[ Z(S,H,s) = \sum_{m=0}^{\infty} a_m \alpha^{-ms} \]
  On the other hand, by $Z(S,H,s) = \sum_{x \in S}H(x)^{-s}$ and the uniqueness of coefficients of Dirichlet series, we have $a_n = \#\{ x \in S \colon H(x) = \alpha^m\}$.
  Therefore, 
  \[ N(S,H,B) = \sum_{\alpha^m \le B} a_m. \]
  So the desired formula is a direct application of Theorem \ref{GeneralAsymptotic}.
\end{proof}

\section{Examples}\label{Example}
In this section, we utilize Theorem \ref{explicit_formula_genus_1} and Theorem \ref{zeta_to_asymptotic} to derive explicit asymptotic formulas for $N(K, \widehat{H}_{\phi},B)$, which are obtained by applying these theorems to specific situations.

\begin{exm}
  Let $K = \mathbb{F}_5(t, \sqrt{t^3+3})$ and $\phi(z) = z^2 + 1/t \in K[z]$. Let's derive an asymptotic formula for $N(K,\phi, B)$.
  First, we list basic facts about the field $K$.
  \begin{itemize}
    \item The field $K$ is the function field of the elliptic curve $y^2 = x^3 + 3$ over $\mathbb{F}_5$.
    \item Since $\left(\frac{f(t)}{t}\right)=\left(\frac{f(0)}{t}\right)=f(0)^{(5-1)/2}=-1$ in $\mathbb{F}_5$, the prime ideal $t\mathbb{F}_5[t]$ in $\mathbb{F}_5[t]$ is inert in the field $K$. Here, $\left(\frac{\,\,}{\,\,}\right)$ is the Jacobi symbol.
    \item Therefore, $S = \{v \in M_K \colon \phi \text{ has bad reduction at } v\} = \{v\}$, where $v$ is the place corresponding to the prime ideal $t\mathcal{O}_K$. Furthermore, $q_v = 5^2 = 25, v(t) = 1$. 
    \item Since $\{(x,y) \in \mathbb{F}_5^2 \colon y^2 = x^3 + 3\} = \{(1,2),(1,3),(2,1),(2,4),(3,0)\}$ has $5$ points, so the Frobenius trace of the elliptic curve $y^2=x^3+3$ is $0$ and the zeta function of $K$ is 
      \[ \zeta_K(s) = \frac{1+5^{1-2s}}{(1-5^{-s})(1-5^{1-s})}. \]
  \end{itemize} 
  Applying the above facts to Theorem \ref{explicit_formula_genus_1}, the zeta function $Z_K(\phi,s)$ can be calculated as follows.
  \begin{align*}
    &Z_K(\phi,s)\\
    =&\frac{\zeta_K(s-1)}{\zeta_K(s)} \frac{u_v + (q_v - 1)u_v^2 - q_v u_v^{3}}{1 - u_v^2} + \frac{q-1}{\zeta_K(s)} \frac{u_v - u_v^2}{1 - u_v^2}\\
    =&5^{-s} \frac{(1-5^{-s})(1+5^{2-s})(1+5^{3-2s})}{(1+5^{-s})(1-5^{2-s})(1+5^{1-2s})} + 4 \cdot 5^{-s}\frac{(1-5^{-s})(1+5^{1-s})}{(1+5^{-s})(1+5^{1-2s})}.
  \end{align*}
  In accordance with the notation used in Theorem \ref{zeta_to_asymptotic}, 
  \[\mathrm{Poles} = \left\{2, \frac{1}{2} + \frac{\pi \sqrt{-1}}{2\log(5)}, \frac{1}{2} + \frac{3 \pi \sqrt{-1}}{2\log(5)}, \frac{\pi \sqrt{-1}}{2\log(5)} \right\}.\]
  Each pole within the set $\mathrm{Poles}$ is simple, and the Laurent coefficient at each pole $a \in \mathrm{Poles}$ is provided in Table \ref{Laurent_coefficients_case1}.
  \begin{table}[htb]
    \caption{Laurent coefficients}\label{Laurent_coefficients_case1}
    \centering
    {\renewcommand{\arraystretch}{2.5}
    \begin{tabular}{c|cccc}
      $a \in \mathrm{Poles}$  & $2$             & $\dfrac{1}{2} + \dfrac{\pi \sqrt{-1}}{2\log(5)}$  & $\dfrac{1}{2} + \dfrac{3\pi \sqrt{-1}}{2\log(5)}$ & $\dfrac{\pi \sqrt{-1}}{\log{5}}$ \\ \hline
      $c_1(a)$                & $\dfrac{8}{91}$ & $\dfrac{2}{7} (23 - 3\sqrt{-5})$                  & $\dfrac{2}{7} (23 + 3\sqrt{-5})$                  & $\dfrac{400}{13}$
    \end{tabular}
    }
  \end{table}

  In this situation, Theorem \ref{zeta_to_asymptotic} says 
  \begin{align*}
    &N(K,\widehat{H}_{\phi}, B) \\
    =& \sum_{5^m \le B} \sum_{a \in \mathrm{Poles}} 5^{am} c_{1}(a) + O(1)\\
    =& \sum_{5^m \le B} \left( \frac{8}{91} 25^m + \frac{2}{7} (23 - 3\sqrt{-5}) (\sqrt{-5})^m + \frac{2}{7} (23 + 3\sqrt{-5}) (-\sqrt{-5})^m + \dfrac{400}{13} (-1)^m \right) + O(1)\\
    =& \frac{8}{91}\sum_{5^m \le B} 25^m + \frac{92}{7} \sum_{5^{2m} \le B} (-5)^m + \frac{30}{7} \sum_{5^{2m+1} \le B} (-5)^m + O(1)\\
    =& \frac{25}{273} 25^{\left\lfloor \frac{\log B}{\log 5} \right\rfloor} + \frac{5}{21} \left(46 (-5)^{\left\lfloor \frac{1}{2} \cdot \frac{\log B}{\log 5} \right\rfloor} + 15 (-5)^{\left\lfloor \frac{1}{2} \left( \frac{\log B}{\log 5} - 1\right)\right\rfloor}\right) + O(1).
  \end{align*}
  Here, it should be noted that the first term has an order of $B^2\times O(1)$, and the second term has an order of $\sqrt{B}\times O(1)$. 
  This corresponds to the real part of the poles of the zeta function $Z_K(\phi, s)$.
\end{exm}

\begin{exm}
  Let $K = \mathbb{F}_5(t, \sqrt{t^3+1})$ and $\phi(z) = z^2 + 1/t \in K[z]$. Let's derive an asymptotic formula for $N(K,\phi, B)$.
  First, we list basic facts about the field $K$.
  \begin{itemize}
    \item The field $K$ is the function field of the elliptic curve $y^2 = x^3 + 1$ over $\mathbb{F}_5$.
    \item Since $\left(\frac{f(t)}{t}\right)=\left(\frac{f(0)}{t}\right)=f(0)^{(5-1)/2}=1$ in $\mathbb{F}_5$, the prime ideal $t\mathbb{F}_5[t]$ in $\mathbb{F}_5[t]$ is split in the field $K$. 
    \item Therefore, $S = \{v \in M_K \colon \phi \text{ has bad reduction at } v\} = \{v_1, v_2\}$, where $v_1, v_2$ are places corresponding to prime ideals lying over $t\mathbb{F}_5[t]$. Furthermore, $q_{v_1} = q_{v_2} = 5, v_1(t) = v_2(t) = 1$. 
    \item Since $\{(x,y) \in \mathbb{F}_5^2 \colon y^2 = x^3 + 1\} = \{(0,1),(0,4),(2,2),(2,3),(4,0)\}$ has $5$ points, so the Frobenius trace of the elliptic curve $y^2=x^3+1$ is $0$ and the zeta function of $K$ is 
      \[ \zeta_K(s) = \frac{1+5^{1-2s}}{(1-5^{-s})(1-5^{1-s})}. \]
  \end{itemize} 
  Applying the above facts to Theorem \ref{explicit_formula_genus_1}, the zeta function $Z_K(\phi,s)$ can be calculated as follows.
  \begin{align*}
    &Z_K(\phi,s)\\
    =&\frac{\zeta_K(s-1)}{\zeta_K(s)} \prod_{v =v_1,v_2}\frac{u_v + (q_v - 1)u_v^2 - q_v u_v^{3}}{1 - u_v^2} + \frac{q-1}{\zeta_K(s)} \prod_{v =v_1,v_2} \frac{u_v - u_v^2}{1 - u_v^2}\\
    =& u^2 \frac{(1-u)(1+5u)(1+125u^4)}{(1+u)(1-5u)(1+5u^4)} + 4u^2 \frac{(1-u)(1-5u^2)}{(1+u)(1+5u^4)},
  \end{align*}
  where $u = 5^{-s/2}$. Therefore,
  \[\mathrm{Poles} = \left\{2, \frac{2\pi\sqrt{-1}}{\log{5}}\right\}\cup \left\{\frac{1}{2} + \frac{k \pi \sqrt{-1}}{2\log{5}} \colon k \in \{1,3,5,7\} \right\}.\]
  Each pole in the set $\mathrm{Poles}$ is simple, and the Laurent coefficient at pole at each pole $a \in \mathrm{Poles}$ is provided in Table \ref{Laurent_coefficients_case1}.
  \begin{table}[htb]
    \caption{Laurent coefficients}\label{Laurent_coefficients_case2}
    \centering
    {\renewcommand{\arraystretch}{2.5}
    \begin{tabular}{c|ccc}
      $a \in \mathrm{Poles}$  & $2$             & $\dfrac{1}{2} + \dfrac{k \pi \sqrt{-1}}{2\log(5)}, k \in \{1,3,5,7\}$      & $\dfrac{2 \pi \sqrt{-1}}{\log{5}}$ \\ \hline
      $c_1(a)$                & $\dfrac{8}{63}$ & $\displaystyle \sum_{i=0}^{3} b_i 5^{i/4} \zeta_8^{ik}$ & $-\dfrac{200}{3}$
    \end{tabular}
    }
  \end{table}

  In Table \ref{Laurent_coefficients_case2}, $\zeta_8 = (1+\sqrt{-1})/\sqrt{2}$ is a primitive $8$-th root of unity and $(b_0,b_1,b_2,b_3) = \frac{2}{63} (-43,130,-37,20)$.
  By Theorem \ref{zeta_to_asymptotic}, we have
  \begin{align*}
    &N(K,\widehat{H}_{\phi}, B) \\
    =& \sum_{\sqrt{5}^m \le B} \sum_{a \in \mathrm{Poles}} 5^{am} c_{1}(a) + O(1)\\
    =& \sum_{\sqrt{5}^m \le B} \left( \frac{8}{63} 5^m + \sum_{k\in\{1,3,5,7\}}\sum_{i=0}^{3} b_i 5^{i/4} \zeta_8^{ik} (5^{1/4}\zeta_8^k)^m  - \dfrac{200}{3} (-1)^m \right) + O(1)\\
    =& \frac{8}{63}\sum_{\sqrt{5}^m \le B} 5^m + \frac{8}{63}\sum_{i=0}^{3}c_i \sum_{\sqrt{5}^{4m+i} \le B}(-5)^m +O(1)\\
    =& \frac{10}{63} 5^{\lfloor \frac{2\log{B}}{\log{5}} \rfloor} + \frac{20}{189} \sum_{i=0}^{3} c_i \cdot (-5)^{\left\lfloor \frac{1}{4} \left( \frac{2\log{B}}{\log{5}} - i \right) \right\rfloor} + O(1),
  \end{align*}
  where $(c_0,c_1,c_2,c_3) = (-43,-100,185,-650)$.
  Similarly to the previous example, it is important to note that, in correspondence with the real parts of the poles of the zeta function $Z_K(\phi,s)$, the first term has an order of $B^2\times O(1)$, and the second term has an order of $\sqrt{B}\times O(1)$.
\end{exm}

\bibliography{height_zeta_2023}
\bibliographystyle{plain}

\end{document}